%
%
\documentclass[11pt]{amsart}
\usepackage[latin1]{inputenc}
\usepackage[T1]{fontenc}
\usepackage{hyperref}
\usepackage{amsfonts}
\usepackage{amsmath}
\usepackage{epsf, subfigure, verbatim}
\usepackage{amssymb}
\usepackage{amsthm}
\usepackage{latexsym}
\usepackage{layout}
\usepackage{amsrefs}


%
\newtheorem{theorem}{Theorem}

\newtheorem{lemma}{Lemma}[section]
\newtheorem{corollary}{Corollary}
\newtheorem{assumption}{Assumption}[section]
\newtheorem{remark}{Remark}[section]

\setlength{\evensidemargin}{\oddsidemargin}
\addtolength{\textwidth}{20ex}
\addtolength{\oddsidemargin}{-10ex}
\addtolength{\evensidemargin}{-10ex}

%

%
\newcommand{\reals}{\mathbb{R}}
\newcommand{\ind}{\mathbf{1}}
\newcommand{\e}{\mathbb{E}}
\newcommand{\p}{\mathbb{P}}
\newcommand{\exptime}{\mathbf{e}_\alpha}
\pagestyle{headings}

\begin{document}

\title[]{Pricing occupation-time options in a mixed-exponential jump-diffusion model}

\author[]{Djilali Ait Aoudia}
\address{Quantact \& D\'epartement de math\'ematiques, Universit\'e du Qu\'ebec \`a Montr\'eal (UQAM), 201 av.\ Pr\'esident-Kennedy, Montr\'eal (Qu\'ebec) H2X 3Y7, Canada}
\email{ait\_aoudia.djilali@courrier.uqam.ca}

\author[]{Jean-Fran\c{c}ois Renaud}
\address{D\'epartement de math\'ematiques, Universit\'e du Qu\'ebec \`a Montr\'eal (UQAM), 201 av.\ Pr\'esident-Kennedy, Montr\'eal (Qu\'ebec) H2X 3Y7, Canada}
\email{renaud.jf@uqam.ca}

\date{\today}

\keywords{Path-dependent options, occupation times, jump-diffusion, mixed-exponential distribution}

\begin{abstract}
In this short paper, in order to price occupation-time options, such as (double-barrier) step options and quantile options, we derive various joint distributions of a mixed-exponential jump-diffusion process and its occupation times of intervals.
\end{abstract}

\maketitle

\section{Introduction}

Let the price of an (underlying) asset $S = \{ S_t , t \geq 0\}$ be of the form:
$$
S_t = S_0 \mathrm{e}^{X_t} ,
$$
where $X = \{ X_t , t \geq 0\}$ is a process to be specified (log-return process). For example, in the Black-Scholes-Merton (BSM) model, $X$ is a Brownian motion with drift. The time spent by $S$ in an interval $I$, or equivalently the time spent by $X$ in an interval $I^\prime$, from time $0$ to time $T$, is given by
$$
A_T^I := \int_0^T \mathbf{1}_{\{S_t \in I\}} \mathrm{d}t = \int_0^T \mathbf{1}_{\{X_t \in I^\prime\}} \mathrm{d}t .
$$
Options linked to occupation times are often seen as generalized barrier options. Instead of being activated (or canceled) when the underlying asset price crosses a barrier, which is a problem from a risk management point of view, the payoff of occupation-time options will depend on the time spent above/below this barrier: the change of value occurs more gradually. There are several different options: (barrier) step options, corridor derivatives, cumulative(-boost) options, quantile options, (cumulative) Parisian options, etc. For a review, see e.g.\ \cite{pechtl_1999}. 

Introduced by Linetsky \cite{linetsky_1999}, a (down-and-out call) \textit{step option} admits the following payoff:
$$
\mathrm{e}^{- \rho A_T^{L,-} } \left( S_T - K \right)_+ = \mathrm{e}^{- \rho A_T^{L,-} } \left( S_0 \mathrm{e}^{X_T} - K \right)_+ ,
$$
where
$$
A_T^{L,-} = \int_0^T \mathbf{1}_{\{S_t \leq L\}} \mathrm{d}t ,
$$
and where $\rho > 0$ is called the \textit{knock-out rate}. Indeed, it is interesting to note that we have the following relationship:
$$
\mathbf{1}_{\{\tau_L^- > T\}} \left( S_T - K \right)_+  \leq \mathrm{e}^{- \rho A_T^{L,-} } \left( S_T - K \right)_+ \leq  \left( S_T - K \right)_+ .
$$
where $\tau_L^- = \inf \{ t \geq 0 \colon S_t \leq L \}$. Later, Davydov and Linetsky \cite{davydov_linetsky_2002} studied double-barrier step call options, which are a generalization of double-barrier options (see \cite{geman_yor_1996}):
$$
\mathrm{e}^{- \rho^- A_T^{L,-} - \rho^+ A_T^{U,+}} \left( S_T - K \right)_+ = \mathrm{e}^{- \rho^- A_T^{L,-} - \rho^+ A_T^{U,+} } \left( S_0 \mathrm{e}^{X_T} - K \right)_+ ,
$$
where
$$
A_T^{U,+} = \int_0^T \mathbf{1}_{\{S_t \geq U\}} \mathrm{d}t ,
$$
and where $\rho^-$ and $\rho^+$ are the knock-out rates. Expressions for the price of double-barrier step options are available in the BSM model and for single-barrier step options in Kou's model for example.

Studied by Fusai \cite{fusai_2000} in the BSM model (see also the work of Akahori and Tak\`acs), a \textit{corridor option} admits the following payoff: for $K < T$,
$$
\left( A_T^{L,U} - K \right)_+ = \left( \int_0^T \mathbf{1}_{\{h < X_s < H\}} \mathrm{d}s - K \right)_+ .
$$
If $h = -\infty$, it is called a \textit{hurdle option}. The distribution of this occupation time is linked to L\'evy's arc-sine law in the BSM model. Again, expressions for the price of double-barrier corridor options are available in the BSM model and for single-barrier barrier options in Kou's model.

Miura \cite{miura_1992} introduced $\alpha$-quantile options as an extension of lookback options. The $\alpha$-quantile of the log-return process $X$ is defined, for $0 \leq \alpha \leq 1$, by
$$
q(\alpha,T) := \inf \left\lbrace h \colon \int_0^T \mathbf{1}_{\{X_t \leq h\}} \mathrm{d}t > \alpha T \right\rbrace .
$$
A fixed-strike $\alpha$-quantile call option admits the following payoff:
$$
\left( S_0 \mathrm{e}^{\gamma q(\alpha,T)}  - K \right)_+ .
$$
When $\alpha=0$ and $\gamma=1$, the quantile option is reduced to a lookback option. Indeed, when $\alpha = 0$,
$$
q(0,T) = \sup_{0 \leq t \leq T} X_t .
$$

In summary, in order to price many of these options, we are interested in the joint distribution of
$$
\left( \int_0^T \mathbf{1}_{\{L < S_t < U\}} \mathrm{d}t , S_T \right) ,
$$
or equivalently,
$$
\left( \int_0^T \mathbf{1}_{\{h < X_t < H\}} \mathrm{d}t , X_T \right) ,
$$
where $h=\ln(L/S_0)$ and $H=\ln(U/S_0)$.

In Black-Scholes-Merton model, in the constant elasticity of variance (CEV) model and in Kou's model, the \textit{standard} technique for deriving this joint distribution (joint Laplace transform) has been to use the Feynman-Ka\v{c} formula; see \cite{hugonnier_1999}, \cite{leung_kwok_2007} and \cite{cai_chen_wan_2010}.

Our goal is to price occupation-time options. In doing so, we extend results previously obtained in a nice paper by Cai, Chen and Wan \cite{cai_chen_wan_2010}. We extend their results in two directions: by looking simultaneously at more general functionals of occupation times and a more general jump-diffusion process. For example, in order to price double step options, we derive the joint distribution of
$$
\left( \int_0^T \mathbf{1}_{\{X_t < h\}} \mathrm{d}t , \int_0^T \mathbf{1}_{\{X_t > H\}} \mathrm{d}t , X_T \right) .
$$
We develop a \textit{probabilistic approach} to obtain these distributions in a mixed-exponential jump-diffusion model (MEM), an approach often refered to as the \textit{perturbation approach} and which is in the spirit of G\'eman \& Yor \cite{geman_yor_1996}; it is based on a decomposition of the trajectories of the underlying (log-return) process using the solutions to the one-sided and the two-sided exit problems. Our methodology uses extensions of results developed by Cai and Kou \cite{cai_kou_2011} (see also \cite{chen_sheu_chang_2013}). Finally, we answer several open questions from \cite{cai_chen_wan_2010}; see e.g.\ the first paragraph on p.\ 434 and our Lemma~\ref{L:inversion}.

In summary, the contributions of this paper consist in new probabilistic derivations of several joint Laplace(-Carson) transforms of a mixed-exponential jump-diffusion process and its occupation times of an interval, all sampled at a fixed time, and the pricing of occupation-time derivatives such as (double-barrier) step options and $\alpha$-quantile options in a mixed-exponential model. The objectives are as in \cite{cai_chen_wan_2010}, but in a more general model and with a different methodology.
%
%

The rest of the paper is organized as follows. Section $2$ introduces the mixed-exponential jump-diffusion process and some of its elementary properties. In Section 3, we present our main theoretical results on occupation times involving the mixed-exponential jump-diffusion process; the proofs are left for the Appendix. Finally, in Section 4, we use the results of Section 3 to derive Laplace transforms of the price for various occupation-time options.

\section{The mixed-exponential jump-diffusion process}

A L\'evy jump-diffusion process $X=\{X_t, t\geq0 \}$ is defined as
$$
X_t = X_0 + \mu t + \sigma W_t + \sum_{i=1}^{N_t} Y_i ,
$$
where $\mu \in \mathbb{R}$ and $\sigma\geq 0$ represent the drift and volatility of the diffusion part respectively, $W=\{W_t, t\geq0 \}$ is a (standard) Brownian motion, $N=\{N_t, t\geq0 \}$ is a homogeneous Poisson process with rate $\lambda$ and $\{Y_i, i=1,2,\dots \}$ are independent and identically distributed random variables. These quantities are mutually independent. When $\sigma>0$, the infinitesimal generator of $X$ acts on functions $h \in \mathcal{C}_{0}^{2}(\mathbb{R})$ and is given by
\begin{equation}\label{E:generator}
\mathcal{L}h(x) = \mu h^\prime(x) + \frac{\sigma^2}{2} h^{\prime \prime}(x) + \lambda \int_{-\infty}^{\infty} \left( h(x+y)-h(x) \right) f_Y(y) \mathrm{d}y .
\end{equation}
When $\sigma=0$, the function $h$ needs only to be once differentiable.

In a jump-diffusion market model, the dynamic of the asset price $S$ is given under the risk-neutral measure $\mathbb{P}$ by:
$$
\frac{\mathrm{d}S_t}{S_{t-}} = r \mathrm{d}t + \sigma \mathrm{d}W_t + \mathrm{d}\left( \sum_{i=1}^{N_t} \left( \mathrm{e}^{Y_i}-1 \right) \right) ,
$$
where $r>0$ is the risk-free rate. Solving this stochastic differential equation, one obtains
$$
S_t = S_0 \mathrm{e}^{X_t} = S_0 \exp \left\lbrace \mu t + \sigma W_t + \sum_{i=1}^{N_t} Y_i \right\rbrace ,
$$
where $\mu = r - \sigma^2/2 - \lambda \left( \mathbb{E} \left[ \mathrm{e}^{Y_1} \right] - 1 \right)$. Clearly, for that purpose, we will need to assume that the $Y_i$'s have a finite moment generating function.

In the pioneer work of Merton \cite{merton_1976}, the common distribution of the $Y_i$'s is chosen to be a normal distribution, while in \cite{kou_2002} it is a double-exponential distribution, i.e.\ the common probability density function (pdf) is given by
$$
f_Y (y) = p \eta \mathrm{e}^{-\eta y} \mathbf{1}_{\{y \geq 0\}} + (1-p) \theta \mathrm{e}^{\theta y} \mathbf{1}_{\{ y<0 \}} ,
$$
where $0<p<1$, $\eta>0$ and $\theta>0$. When the jumps sizes are hyper-exponentially distributed, their common pdf is given by
\begin{equation}\label{E:pdf_jumps_hem}
f_Y (y) = \sum_{i=1}^{m} p_{i} \eta_i \mathrm{e}^{-\eta_i y} \mathbf{1}_{\{y \geq 0\}} + \sum_{j=1}^{n} q_j \theta_j \mathrm{e}^{\theta_j y} \mathbf{1}_{\{ y<0 \}} ,
\end{equation}
where $p_i, q_j > 0$ for all $i=1,2,\dots,m$ and $j=1,2,\dots,n$, and such that $\sum_{i=1}^{m}p_i+\sum_{j=1}^{n}q_i=1$, and where $\eta_1 < \ldots < \eta_m$ and $\theta_1 < \ldots < \theta_n$, then $X$ is said to be a hyper-exponential jump-diffusion (HEJD) process and the market model is called the hyper-exponential model (HEM).

We will use a slightly more general, and thus more flexible, jump distribution: the mixed-exponential distribution. In this case, the common pdf is given by
\begin{equation}\label{E:pdf_jumps_mem}
f_Y (y) = p_u \sum_{i=1}^{m} p_{i} \eta_i \mathrm{e}^{-\eta_i y} \mathbf{1}_{\{y \geq 0\}} + q_d \sum_{j=1}^{n} q_j \theta_j \mathrm{e}^{\theta_j y} \mathbf{1}_{\{ y<0 \}} ,
\end{equation}
where $p_u, q_d \geq 0$ and $p_u+q_d=1$, where now $p_i, q_j \in (-\infty, \infty)$ for all $i=1,2,\dots,m$ and $j=1,2,\dots,n$, such that $\sum_{i=1}^{m}p_i = 1$, $\sum_{j=1}^{n}q_i=1$, and where again $\eta_1 < \ldots < \eta_m$ and $\theta_1 < \ldots < \theta_n$. The resulting jump-diffusion process $X$ is said to be a mixed-exponential jump-diffusion (MEJD) process and the market model is called the mixed-exponential model (MEM).

The MEM is a financial model fitting the data quite well and still being very tractable; for more information, see \cite{cai_kou_2011}. One of the main feature is probably that the mixed-exponential distribution can approximate any jump distribution (in the sense of weak convergence). See the paper by Cai and Kou \cite{cai_kou_2011} for more information on this process.

Throughout the rest of the paper, the law of $X$ such that $X_0 = x$ is denoted by $\p_x$ and the corresponding expectation by $\e_x$; we write $\p$ and $\e$ when $x=0$. The L\'evy exponent of a MEJD $X$ is given by
\begin{align*}
G(\zeta) &= \frac{\ln \mathbb{E} \left[ \exp(\zeta X_t) \right]}{t}\\
&= \mu \zeta + \frac{\sigma^2}{2} \zeta^2 +\lambda \left( p_u \sum_{i=1}^{m} \frac{p_i \eta_i}{\eta_i-\zeta} + q_d \sum_{i=1}^{n} \frac{q_i \theta_i}{\theta_i + \zeta} - 1 \right) ,
\end{align*}
for any $\zeta \in (-\theta_1,\eta_1)$. Then, clearly, the trend of the process is given by
$$
\e \left[ X_1 \right] = G'(0+) = \mu + \lambda \left( p_u\sum_{i=1}^m \frac{p_i}{\eta_i} - q_d \sum_{j=1}^m \frac{q_j}{\theta_j} \right) .
$$

For any $\alpha \in \mathbb{R}$, the function $\zeta \mapsto G(\zeta)-\alpha$ has at most $n+m+2$ real roots. It can be shown (see \cite{cai_kou_2011}*{Theorem 3.1}) that, for a sufficiently large $\alpha>0$, the corresponding Cram\'er-Lundberg equation $G(\zeta)=\alpha$ has exactly $n+m+2$ distinct real roots; there are $m+1$ positive roots denoted by $\beta_{1,\alpha},\ldots,\beta_{m+1,\alpha}$ and $n+1$ negative roots $\gamma_{1,\alpha},\ldots,\gamma_{n+1,\alpha}$, satisfying 
\begin{gather*}
0 < \beta_{1,\alpha} < \eta_1 < \beta_{2,\alpha} < \ldots < \eta_m < \beta_{m+1,\alpha} < \infty ,\\
-\infty < \gamma_{n+1,\alpha} < -\theta_n < \gamma_{n,\alpha} < \ldots < \gamma_{2,\alpha} < \theta_1 < \gamma_{1,\alpha} < 0 .
\end{gather*}
Finally, let $S=m+n+2$ and define $\overrightarrow{\rho_\alpha} = \left( \rho_{1,\alpha},\dots,\rho_{S,\alpha} \right) = \left( \beta_{1,\alpha},\ldots,\beta_{m+1,\alpha}, \gamma_{1,\alpha},\ldots,\gamma_{n+1,\alpha} \right)$, the vector containing all the roots.
%

\begin{assumption}\label{A:roots}
For the rest of the paper, we assume that, for a given value of $\alpha$, the Cram\'er-Lundberg equation $G(\zeta)=\alpha$ has exactly $n+m+2$ distinct real solutions as above.
\end{assumption}

\begin{remark}
In the case of a HEJD, a detailed study of the roots is undertaken in \cite{cai_2009}.
\end{remark}

\subsection{First passage and two-sided exit problems}

For $b \in \mathbb{R}$, define the first passage times $\tau^{+}_b = \inf \{t\geq 0 \colon X_t > b \}$ and $\tau^{-}_b=\inf\{ t \geq 0 \colon X_t < b \}$, with the convention $\inf \emptyset=\infty$.

Consider two barrier levels $h$ and $H$ such that $h<H$. It has been shown in \cite{cai_kou_2011}*{Theorem 3.3} that, for any sufficiently large $\alpha > 0$, $\theta<\eta_1$ and $x < h$,
$$
\e_x \left[ \mathrm{e}^{-\alpha \tau_h^{+} + \theta X_{\tau_h^{+}} } \right] = \sum_{i=1}^{m+1} c_i \mathrm{e}^{\beta_{i,\alpha} x} ,
$$
where $(c_1, c_2, \dots, c_{m+1})$ is a vector of constants (uniquely determined by a nonsingular linear system). It should be pointed out here that the coefficients depend on $\alpha$, $\theta$ and $h$, as well as the parameters of the process (explicitly and implicitly).

Recently, in \cite{chen_sheu_chang_2013}*{Theorem 2.5}, a very similar result using the same method of proof as in \cite{cai_kou_2011}*{Theorem 3.3} has been obtained for the case of a HEJD (not a MEJD) process. It has been shown that, for $\alpha \geq 0$ (and for $\sigma > 0$), a nonnegative bounded function $g(\cdot)$ on $(h,H)^c$ and $h < x < H$, 
$$
\e_x \left[ \mathrm{e}^{-\alpha \left( \tau_H^{+} \wedge \tau_h^{-} \right) } g \left( X_{\tau_H^{+} \wedge \tau_h^{-}} \right) \right] = \sum_{i=1}^{S} b_{i} \mathrm{e}^{\rho_{i} x} ,
$$
where $(b_1, b_2, \dots, b_{S})$ is a vector of constants also to be determined.
%

We now provide slight extensions of the abovementioned results. The proof is left to the reader; it follows the same steps as in the proof of \cite{cai_kou_2011}*{Theorem 3.3} and \cite{chen_sheu_chang_2013}*{Theorem 2.5}.

\begin{theorem}\label{th1}
Let $X$ be a MEJD process. Under Assumption~\ref{A:roots} for a given value of $\alpha > 0$, and for a nonnegative and bounded real-valued function $g(\cdot)$, we have:
\begin{enumerate}
\item for $x < H$,
$$
\e_x \left[ \mathrm{e}^{-\alpha \tau_H^{+}} g \left( X_{\tau_H^{+}} \right)  \right] = \sum_{i=1}^{m+1} \omega_i \mathrm{e}^{\beta_{i,\alpha} x} ,
$$
where $\overrightarrow{\omega} = (\omega_1, \omega_2, \dots, \omega_{m+1})$ is a vector (uniquely) determined by the following linear system:
$$
A^{H,\alpha} \overrightarrow{\omega} = J^{H,g} ,
$$
where $A^{H,\alpha}$ is an $(m+1) \times (m+1)$ nonsingular matrix given by
$$
A^{H,\alpha} = \left(
\begin{array}{cccc}
\mathrm{e}^{\beta_{1,\alpha} H} & \mathrm{e}^{\beta_{2,\alpha} H} & \dots & \mathrm{e}^{\beta_{m+1,\alpha} H} \\
\frac{\mathrm{e}^{\beta_{1,\alpha} H}}{\eta_1-\beta_{1,\alpha}} & \frac{\mathrm{e}^{\beta_{2,\alpha} H}}{\eta_1-\beta_{2,\alpha}} & \dots & \frac{\mathrm{e}^{\beta_{m+1,\alpha} H}}{\eta_1-\beta_{m+1,\alpha}} \\
\frac{\mathrm{e}^{\beta_{1,\alpha} H}}{\eta_2-\beta_{1,\alpha}} & \frac{\mathrm{e}^{\beta_{2,\alpha} H}}{\eta_2-\beta_{2,\alpha}} & \dots & \frac{\mathrm{e}^{\beta_{m+1,\alpha} H}}{\eta_2-\beta_{m+1,\alpha}} \\
\vdots & \vdots & \ddots & \vdots \\
\frac{\mathrm{e}^{\beta_{1,\alpha} H}}{\eta_m-\beta_{1,\alpha}} & \frac{\mathrm{e}^{\beta_{2,\alpha} H}}{\eta_m-\beta_{2,\alpha}} & \dots & \frac{\mathrm{e}^{\beta_{m+1,\alpha} H}}{\eta_m-\beta_{m+1,\alpha}} \\
\end{array}
\right) ,
$$
and where $J^{H,g}$ is an $(m+1)$-dimensional vector given by
$$
\left( g(H+) , \mathrm{e}^{\eta_1 H} \int_H^\infty g(y) \mathrm{e}^{-\eta_1 y} \mathrm{d}y, \mathrm{e}^{\eta_2 H} \int_H^\infty g(y) \mathrm{e}^{-\eta_2 y} \mathrm{d}y , \dots , \mathrm{e}^{\eta_m H} \int_H^\infty g(y) \mathrm{e}^{-\eta_m y} \mathrm{d}y \right) .
$$

\item for $x > h$,
$$
\e_x \left[ \mathrm{e}^{-\alpha \tau_h^{-}} g \left( X_{\tau_h^{-}} \right) \right] = \sum_{i=1}^{n+1} \nu_i \mathrm{e}^{\gamma_{i,\alpha} x} ,
$$
where $\overrightarrow{\nu} = (\nu_1, \nu_2, \dots, \nu_{n+1})$ is a vector (uniquely) determined by the following linear system:
$$
A^{h,\alpha} \overrightarrow{\mathbf{\nu}} = J^{h,g} ,
$$
where $A^{h,\alpha}$ is an $(n+1) \times (n+1)$ nonsingular matrix given by
$$
A^{h,\alpha} = \left(
\begin{array}{cccc}
\mathrm{e}^{\gamma_{1,\alpha} h} & \mathrm{e}^{\gamma_{2,\alpha} h} & \dots & \mathrm{e}^{\gamma_{n+1,\alpha} h} \\
\frac{\mathrm{e}^{\gamma_{1,\alpha} h}}{\theta_1+\gamma_{1,\alpha}} & \frac{\mathrm{e}^{\gamma_{2,\alpha} h}}{\theta_1+\gamma_{2,\alpha}} & \dots & \frac{\mathrm{e}^{\gamma_{n+1,\alpha} h}}{\theta_1+\gamma_{n+1,\alpha}} \\
\frac{\mathrm{e}^{\gamma_{1,\alpha} h}}{\theta_2+\gamma_{1,\alpha}} & \frac{\mathrm{e}^{\gamma_{2,\alpha} h}}{\theta_2+\gamma_{2,\alpha}} & \dots & \frac{\mathrm{e}^{\gamma_{n+1,\alpha} h}}{\theta_2+\gamma_{n+1,\alpha}} \\
\vdots & \vdots & \ddots & \vdots \\
\frac{\mathrm{e}^{\gamma_{1,\alpha} h}}{\theta_n+\gamma_{1,\alpha}} & \frac{\mathrm{e}^{\gamma_{2,\alpha} h}}{\theta_n+\gamma_{2,\alpha}} & \dots & \frac{\mathrm{e}^{\gamma_{n+1,\alpha} h}}{\theta_n+\gamma_{n+1,\alpha}} \\
\end{array}
\right) ,
$$
and where $J^{h,g}$ is an $(n+1)$-dimensional vector given by
$$
\left( g(h-) , \mathrm{e}^{-\theta_1 h} \int_{-\infty}^h g(y) \mathrm{e}^{\theta_1 y} \mathrm{d}y , \mathrm{e}^{-\theta_2 h} \int_{-\infty}^h g(y) \mathrm{e}^{\theta_2 y} \mathrm{d}y , \dots , \mathrm{e}^{-\theta_n h} \int_{-\infty}^h g(y) \mathrm{e}^{\theta_n y} \mathrm{d}y \right) .
$$
\item for $h<x<H$,
$$
\e_x \left[ \mathrm{e}^{-\alpha \left( \tau_H^{+} \wedge \tau_h^{-} \right) } g \left( X_{\tau_H^{+} \wedge \tau_h^{-}} \right) \right] = \sum_{i=1}^{m+1} \omega_i \mathrm{e}^{\beta_{i,\alpha} x} + \sum_{i=1}^{n+1} \nu_i \mathrm{e}^{\gamma_{i,\alpha} x} = \sum_{i=1}^{S} Q_{i} \mathrm{e}^{\rho_{i, \alpha} x} ,
$$
where $\mathbf{Q} = (Q_1, Q_2, \dots, Q_{S}) = (\omega_1, \omega_2, \dots, \omega_{m+1}, \nu_1, \nu_2, \dots, \nu_{n+1})$ is a vector (uniquely) determined by the following linear system:
$$
A^{h,H,\alpha} \mathbf{Q} = J^{h,H,g} ,
$$
where $A^{h,H,\alpha}$ is an $S \times S$ nonsingular matrix given by
\begin{align*}
A^{h,H,\alpha} &= \left(
\begin{array}{cccccc}
\mathrm{e}^{\beta_{1,\alpha} h} & \dots & \mathrm{e}^{\beta_{m+1,\alpha} h} & \mathrm{e}^{\gamma_{1,\alpha} h} & \dots & \mathrm{e}^{\gamma_{n+1,\alpha} h} \\
\mathrm{e}^{\beta_{1,\alpha} H} & \dots & \mathrm{e}^{\beta_{m+1,\alpha} H} & \mathrm{e}^{\gamma_{1,\alpha} H} & \dots & \mathrm{e}^{\gamma_{n+1,\alpha} H} \\
\frac{\mathrm{e}^{\beta_{1,\alpha} h}}{\theta_1+\beta_{1,\alpha}} & \dots & \frac{\mathrm{e}^{\beta_{m+1,\alpha} h}}{\theta_1+\beta_{m+1,\alpha}} & \frac{\mathrm{e}^{\gamma_{1,\alpha} h}}{\theta_1+\gamma_{1,\alpha}} & \dots &\frac{\mathrm{e}^{\gamma_{n+1,\alpha} h}}{\theta_1+\gamma_{n+1,\alpha}} \\
\vdots & \vdots & \ddots & \vdots & \vdots \\
\frac{\mathrm{e}^{\beta_{1,\alpha} h}}{\theta_n+\beta_{1,\alpha}} & \dots & \frac{\mathrm{e}^{\beta_{m+1,\alpha} h}}{\theta_n+\beta_{m+1,\alpha}} & \frac{\mathrm{e}^{\gamma_{1,\alpha} h}}{\theta_n+\gamma_{1,\alpha}} & \dots & \frac{\mathrm{e}^{\gamma_{n+1,\alpha} h}}{\theta_n+\gamma_{n+1,\alpha}} \\
\frac{\mathrm{e}^{\beta_{1,\alpha} H}}{\eta_1-\beta_{1,\alpha}} & \dots & \frac{\mathrm{e}^{\beta_{m+1,\alpha} H}}{\eta_1-\beta_{m+1,\alpha}} & \frac{\mathrm{e}^{\gamma_{1,\alpha} H}}{\eta_1-\gamma_{1,\alpha}} & \dots & \frac{\mathrm{e}^{\gamma_{n+1,\alpha} H}}{\eta_1-\gamma_{n+1,\alpha}} \\
\vdots & \vdots & \ddots & \vdots & \vdots \\
\frac{\mathrm{e}^{\beta_{1,\alpha} H}}{\eta_m-\beta_{1,\alpha}} & \dots & \frac{\mathrm{e}^{\beta_{m+1,\alpha} H}}{\eta_m-\beta_{m+1,\alpha}} & \frac{\mathrm{e}^{\gamma_{1,\alpha} H}}{\eta_m-\gamma_{1,\alpha}} & \dots & \frac{\mathrm{e}^{\gamma_{n+1,\alpha} H}}{\eta_m-\gamma_{n+1,\alpha}} \\
\end{array}
\right)\\
&= \left(
\begin{array}{cccc}
\mathrm{e}^{\rho_{1,\alpha} h} & \mathrm{e}^{\rho_{2,\alpha} h} & \dots & \mathrm{e}^{\rho_{S,\alpha} h} \\
\mathrm{e}^{\rho_{1,\alpha} H} & \mathrm{e}^{\rho_{2,\alpha} H} & \dots & \mathrm{e}^{\rho_{S,\alpha} H} \\
\frac{\mathrm{e}^{\rho_{1,\alpha} h}}{\theta_1+\rho_{1,\alpha}} & \frac{\mathrm{e}^{\rho_{2,\alpha} h}}{\theta_1+\rho_{2,\alpha}} & \dots & \frac{\mathrm{e}^{\rho_{S,\alpha} h}}{\theta_1+\rho_{S,\alpha}} \\
\vdots & \vdots & \ddots & \vdots \\
\frac{\mathrm{e}^{\rho_{1,\alpha} h}}{\theta_n+\rho_{1,\alpha}} & \frac{\mathrm{e}^{\rho_{2,\alpha} h}}{\theta_n+\rho_{2,\alpha}} & \dots & \frac{\mathrm{e}^{\rho_{S,\alpha} h}}{\theta_n+\rho_{S,\alpha}} \\
\frac{\mathrm{e}^{\rho_{1,\alpha} H}}{\eta_1-\rho_{1,\alpha}} & \frac{\mathrm{e}^{\rho_{2,\alpha} H}}{\eta_1-\rho_{2,\alpha}} & \dots & \frac{\mathrm{e}^{\rho_{S,\alpha} H}}{\eta_1-\rho_{S,\alpha}} \\
\vdots & \vdots & \ddots & \vdots \\
\frac{\mathrm{e}^{\rho_{1,\alpha} H}}{\eta_m-\rho_{1,\alpha}} & \frac{\mathrm{e}^{\rho_{2,\alpha} H}}{\eta_m-\rho_{2,\alpha}} & \dots & \frac{\mathrm{e}^{\rho_{S,\alpha} H}}{\eta_m-\rho_{S,\alpha}} \\
\end{array}
\right) ,
\end{align*}
and where $J^{h,H,g}$ is an $S$-dimensional vector given by
$$
J^{h,H,g} = \left( J^{H,g} , J^{h,g} \right) .
$$
\end{enumerate}
\end{theorem}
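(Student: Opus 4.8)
The plan is a guess-and-verify argument, carried out first for part~(1) and then adapted by symmetry to part~(2) and by superposition to part~(3). For part~(1) I would introduce the candidate function $u$ defined by $u(x) = \sum_{i=1}^{m+1}\omega_i \ee^{\beta_{i,\alpha}x}$ for $x<H$ and $u(x)=g(x)$ for $x\geq H$, using only the $m+1$ \emph{positive} roots so that $u$ stays bounded as $x\to-\infty$. The two things to establish are: (a) that the coefficient vector $\overrightarrow{\omega}$ solving $A^{H,\alpha}\overrightarrow{\omega}=J^{H,g}$ makes $u$ satisfy $(\mathcal{L}-\alpha)u=0$ on $(-\infty,H)$ together with the correct boundary behaviour at $H$; and (b) that this analytic characterization forces $u(x)=\e_x[\ee^{-\alpha\tau_H^+}g(X_{\tau_H^+})]$.

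For step~(a) I would substitute the ansatz into \eqref{E:generator} and split the jump integral $\int_{-\infty}^\infty(u(x+y)-u(x))f_Y(y)\dd y$ at the point $y=H-x$, since for $y<H-x$ the argument $x+y$ lies in the exponential region while for $y\geq H-x$ it lies in the region where $u=g$; note that for $y<0$ the argument always stays below $H$, so the downward jumps contribute only to exponential terms. Collecting terms, the coefficient of each $\ee^{\beta_{i,\alpha}x}$ equals $\omega_i\,(G(\beta_{i,\alpha})-\alpha)$ and therefore vanishes because $\beta_{i,\alpha}$ solves $G(\zeta)=\alpha$. What remains are the terms proportional to $\ee^{\eta_k x}$ ($k=1,\dots,m$), coming from the upward jumps that overshoot $H$ and from the tail integral of $g$; since the $\eta_k$ are distinct, each such coefficient must vanish separately, and after rearranging (using $1/(\beta_{i,\alpha}-\eta_k)=-1/(\eta_k-\beta_{i,\alpha})$) these $m$ equations are exactly rows $2,\dots,m+1$ of $A^{H,\alpha}\overrightarrow{\omega}=J^{H,g}$. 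The one remaining degree of freedom is fixed by the continuous-fit condition $u(H-)=g(H+)$ appropriate to the case $\sigma>0$, which is the first row of the system. Distinctness of the roots (Assumption~\ref{A:roots}) guarantees, exactly as in \cite{cai_kou_2011} and \cite{chen_sheu_chang_2013}, that $A^{H,\alpha}$ is nonsingular, so $\overrightarrow{\omega}$ exists and is unique.

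For step~(b) I would apply the It\^o/Dynkin formula to $\ee^{-\alpha t}u(X_t)$ and use optional stopping at $t\wedge\tau_H^+$. Since $\beta_{i,\alpha}>0$ the function $u$ is bounded on $(-\infty,H]$ and $g$ is bounded, so $M_t:=\ee^{-\alpha(t\wedge\tau_H^+)}u(X_{t\wedge\tau_H^+})$ is a bounded local martingale, hence a true uniformly integrable martingale; letting $t\to\infty$, the contribution on $\{\tau_H^+=\infty\}$ vanishes because $\alpha>0$, while on $\{\tau_H^+<\infty\}$ one has $X_{\tau_H^+}\geq H$ so $u(X_{\tau_H^+})=g(X_{\tau_H^+})$, and $M_0=u(x)$ yields the claimed identity. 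The main obstacle is precisely the rigorous justification of this martingale property for the piecewise-defined $u$, which is merely continuous (not $\mathcal{C}^2$) across $H$: the drift and diffusion parts are controlled because $u$ is smooth on the open interval $(-\infty,H)$ and the stopped process accumulates no local time at $H$ before $\tau_H^+$ (it creeps up to $H$ only at the terminal instant), so the It\^o--Tanaka local-time term at $H$ drops out; meanwhile the compensated jump term is a martingale precisely because of the integral conditions derived in step~(a). This is the step that requires the most care, and it mirrors \cite{cai_kou_2011}*{Theorem 3.3} and \cite{chen_sheu_chang_2013}*{Theorem 2.5}.

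Part~(2) is handled by the mirror-image construction: for downward first passage one uses the $n+1$ negative roots $\gamma_{i,\alpha}$ (so that $\ee^{\gamma_{i,\alpha}x}$ is bounded as $x\to+\infty$), splits the jump integral at $y=h-x$, and finds that the upward jumps never cross $h$ while the downward overshoot produces the terms proportional to $\ee^{-\theta_l x}$ ($l=1,\dots,n$); these give rows $2,\dots,n+1$ of $A^{h,\alpha}\overrightarrow{\nu}=J^{h,g}$, with continuity $u(h+)=g(h-)$ supplying the first row. For part~(3) I would take the full ansatz $u(x)=\sum_{i=1}^{S}Q_i\ee^{\rho_{i,\alpha}x}$ on $(h,H)$ and $u=g$ outside, and now split the jump integral at \emph{both} $y=h-x$ and $y=H-x$. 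The same cancellation leaves terms proportional to $\ee^{-\theta_l x}$ (downward overshoot below $h$) and $\ee^{\eta_k x}$ (upward overshoot above $H$), yielding $n+m$ integral equations, while the two continuous-fit conditions $u(h+)=g(h-)$ and $u(H-)=g(H+)$ supply the remaining two rows, for a total of $S=m+n+2$ equations matching the $S$ unknowns; these assemble into $A^{h,H,\alpha}\mathbf{Q}=J^{h,H,g}$, and the verification step is identical, now stopping at the a.s.\ finite time $\tau_H^+\wedge\tau_h^-$.
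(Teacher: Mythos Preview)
Your proposal is correct and is exactly the approach the paper intends: the paper does not give its own proof of this theorem but states that ``it follows the same steps as in the proof of \cite{cai_kou_2011}*{Theorem 3.3} and \cite{chen_sheu_chang_2013}*{Theorem 2.5}'', which is precisely the guess-and-verify martingale argument you outline (exponential ansatz with the positive/negative/all roots, derivation of the linear system from the overshoot integrals plus continuous fit, and verification via optional stopping applied to $\ee^{-\alpha t}u(X_t)$). Your handling of the regularity issue at the barrier is also in line with those references.
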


Clearly, by the definition of the stopping times, if $x > h$ (resp.\ $x < H$), then
$$
\e_x \left[ \mathrm{e}^{-\alpha \tau_h^{+}} g \left( X_{\tau_h^{+}} \right)  \right] = g(x) \quad \left( \text{resp.} \quad \e_x \left[ \mathrm{e}^{-\alpha \tau_H^{-}} g \left( X_{\tau_H^{-}} \right) \right] = g(x) \right) ,
$$
and, if $x < h$ or $x > H$, then
$$
\e_x \left[ \mathrm{e}^{-\alpha \left( \tau_H^{+} \wedge \tau_h^{-} \right) } g \left( X_{\tau_H^{+} \wedge \tau_h^{-}} \right) \right] = g(x) .
$$

\section{Our main results}

Our first objective is to obtain the joint distribution of
$$
\left( \int_0^T \mathbf{1}_{\{h < X_t < H\}} \mathrm{d}t , X_T \right) ,
$$
for a given $T>0$. In order to do so, we will compute the following joint Laplace-Carson transform with respect to $T$: for each $x \in \reals$, set
\begin{equation}\label{E:defw}
w(x;h,H,\alpha,\rho,\gamma) := \int_0^\infty \alpha \mathrm{e}^{-\alpha T} \e_x \left[ \mathrm{e}^{- \rho \int_0^T \ind_{\{h < X_t < H\}} \mathrm{d}t + \gamma X_T} \right] \mathrm{d}T ,
\end{equation}
where $\alpha>0$, $\rho \geq 0$ and $\gamma \in \reals$. Clearly, we have
$$
w(x) = \e_x \left[ \mathrm{e}^{- \rho \int_0^{\exptime} \ind_{\{h < X_t < H\}} \mathrm{d}t + \gamma X_{\exptime}} \right] ,
$$
where $\exptime$ is an exponentially distributed random variable (independent of $X$) with mean $1/\alpha$.

Here is our main result.
\begin{theorem}\label{T:main}
For any $0\leq\gamma<\min(\eta_1,\theta_1), \rho>0$ and $G(\gamma)<\alpha$, we have
\begin{multline*}
\int_0^\infty \alpha \mathrm{e}^{-\alpha T} \e_x \left[ \mathrm{e}^{- \rho \int_0^T \ind_{\{h < X_t < H\}} \mathrm{d}t + \gamma X_T} \right] \mathrm{d}T \\
=
\begin{cases}
\sum_{i=1}^{m+1} \omega_{i}^{L} \mathrm{e}^{\beta_{i,\alpha} (x-h)} - c_L \mathrm{e}^{\gamma x} , & x\leq h ,\\
-\sum_{i=1}^{m+1} \omega_{i}^{0} \mathrm{e}^{\beta_{i,\alpha+\rho} (x-H)} - \sum_{j=1}^{n+1} \nu_{j}^{0} \mathrm{e}^{-\gamma_{j,\alpha+\rho} (x-h)} - c_{0} \mathrm{e}^{\gamma x} , & h<x< H ,\\
\sum_{j=1}^{n+1} \nu_j^{U} \mathrm{e}^{-\gamma_{j,\alpha} (x-H)} - c_U \mathrm{e}^{\gamma x} , & x \geq H ,
\end{cases} 
\end{multline*}
where
$$
c_L=c_U=\frac{\alpha}{G(\gamma)-\alpha}, \qquad c_0=\frac{\alpha}{G(\gamma)-(\alpha+\rho)} .
$$
The vector of coefficients
$$
Q=\left( \omega_{i}^{L}; \omega_i^0; \nu_j^0; \nu_j^{U}; i=1,\ldots,m+1; j=1,\ldots,n+1 \right)
$$
satisfies a linear system
\begin{equation}\label{E:linear_system}
BQ=V.
\end{equation}
Here $V$ is a $2S$-dimensional vector, 
\begin{equation}\label{V}
V=(c_U-c_0) \; \left( \begin{array}{c} V_1 \\ V_2 \end{array} \right)
\end{equation}
where the $S$-dimensional column vectors $V_1$ and $V_2$ are given by
\begin{align*}
V_1 &= \left( \mathrm{e}^{\gamma h} , \gamma \mathrm{e}^{\gamma h} , \frac{\mathrm{e}^{\gamma h}}{\eta_{1}-\gamma} , \cdots ,  \frac{\mathrm{e}^{\gamma h}}{\eta_{m}-\gamma} , \frac{\mathrm{e}^{\gamma h}}{\theta_{1}+\gamma} , \cdots , \frac{\mathrm{e}^{\gamma h}}{\theta_{n}+\gamma} \right) \\
V_2 &= \left( \mathrm{e}^{\gamma H} , \gamma \mathrm{e}^{\gamma H} , \frac{\mathrm{e}^{\gamma H}}{\eta_{1}-\gamma} , \cdots , \frac{\mathrm{e}^{\gamma H}}{\eta_{m}-\gamma} , \frac{\mathrm{e}^{\gamma H}}{\theta_{1}+\gamma} , \cdots , \frac{\mathrm{e}^{\gamma H}}{\theta_{n}+\gamma} \right)
\end{align*}
and $B$ is a $2S\times2S$ matrix
\begin{equation}\label{E:matrixB}
B = \left(
\begin{array}{cc}
M & N Z_{\beta}\\
M Z_{\gamma} & N
\end{array}
\right) ,
\end{equation}
where $Z_{\beta}$ and $Z_{\gamma}$ are $S \times S$ diagonal matrices with elements
$$
\left\lbrace 0,\ldots,0, \mathrm{e}^{\beta_{1,\alpha+\rho}(h-H)}, \ldots, \mathrm{e}^{\beta_{m+1,\alpha+\rho}(h-H)} \right\rbrace
$$
and
$$
\left\lbrace 0, \ldots, 0, \mathrm{e}^{\gamma_{1,\alpha+\rho}(h-H)}, \ldots, \mathrm{e}^{\gamma_{n+1,\alpha+\rho}(h-H)} \right\rbrace ,
$$
respectively, and where $M$ and $N$ are given by
\begin{equation*}
M=\left(
\begin{array}{cccccc}
1 & \cdots & 1 & 1 & \cdots & 1\\
\beta_{1,\alpha} & \cdots & \beta_{m+1,\alpha} & -\gamma_{1,\alpha+\rho} & \cdots & -\gamma_{n+1,\alpha+\rho}\\
\frac{1}{\eta_1-\beta_{1,\alpha}} & \cdots & \frac{1}{\eta_{1}-\beta_{m+1,\alpha}} & \frac{1}{\eta_{1}+\gamma_{1,\alpha+\rho}} & \cdots & \frac{1}{\eta_{1}+\gamma_{n+1,\alpha+\rho}}\\
\vdots & \ddots & \vdots &  \vdots & \ddots &\vdots \\
\frac{1}{\eta_m-\beta_{1,\alpha}} & \cdots & \frac{1}{\eta_m-\beta_{m+1,\alpha}} & \frac{1}{\eta_m+\gamma_{1,\alpha+\rho}} & \cdots & \frac{1}{\eta_m+\gamma_{n+1,\alpha+\rho}}\\
\frac{1}{\theta_{1}+\beta_{1,\alpha}} & \cdots & \frac{1}{\theta_{1}+\beta_{m+1,\alpha}} & \frac{1}{\theta_{1}-\gamma_{1,\alpha+\rho}} & \cdots & \frac{1}{\theta_{1}-\gamma_{n+1,\alpha+\rho}}\\
\vdots & \ddots & \vdots &  \vdots & \ddots &\vdots \\
\frac{1}{\theta_{n}+\beta_{1,\alpha}} & \cdots & \frac{1}{\theta_{n}+\beta_{m+1,\alpha}} & \frac{1}{\theta_{n}-\gamma_{1,\alpha+\rho}} & \cdots & \frac{1}{\theta_{n}-\gamma_{n+1,\alpha+\rho}}
\end{array}
\right)
\end{equation*}
and 
\begin{equation*}
N=\left(
\begin{array}{cccccc}
1 & \cdots & 1 & 1 & \cdots & 1\\
-\gamma_{1,\alpha} & \cdots & -\gamma_{n+1,\alpha} & \beta_{1,\alpha+\rho} & \cdots & \beta_{m+1,\alpha+\rho}\\
\frac{1}{\eta_1+\gamma_{1,\alpha}} & \cdots & \frac{1}{\eta_{1}+\gamma_{n+1,\alpha}} & \frac{1}{\eta_{1}-\beta_{1,\alpha+\rho}} & \cdots & \frac{1}{\eta_{1}-\beta_{m+1,\alpha+\rho}}\\
\vdots & \ddots & \vdots &  \vdots & \ddots &\vdots \\
\frac{1}{\eta_m+\gamma_{1,\alpha}} & \cdots & \frac{1}{\eta_{n}+\gamma_{n+1,\alpha}} & \frac{1}{\eta_m-\beta_{1,\alpha+\rho}} & \cdots & \frac{1}{\eta_m-\beta_{m+1,\alpha+\rho}}\\
\frac{1}{\theta_{1}-\gamma_{1,\alpha}} & \cdots & \frac{1}{\theta_{1}-\gamma_{n+1,\alpha}} & \frac{1}{\theta_{1}+\beta_{1,\alpha+\rho}} & \cdots & \frac{1}{\theta_{1}+\beta_{m+1,\alpha+\rho}}\\
\vdots & \ddots & \vdots &  \vdots & \ddots &\vdots \\
\frac{1}{\theta_{n}-\gamma_{1,\alpha}} & \cdots & \frac{1}{\theta_{n}-\gamma_{n+1,\alpha}} & \frac{1}{\theta_{n}+\beta_{1,\alpha+\rho}} & \cdots & \frac{1}{\theta_{n}+\beta_{m+1,\alpha+\rho}}
\end{array}
\right) .
\end{equation*}
\end{theorem}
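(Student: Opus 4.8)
The plan is to follow the probabilistic (perturbation) approach advertised in the introduction, reducing $w$ to the one-sided and two-sided exit functionals already evaluated in Theorem~\ref{th1}. First I would rewrite the transform with an independent exponential clock: since $w(x)=\e_x[\ee^{-\rho\int_0^{\exptime}\ind_{\{h<X_t<H\}}\dd t+\gamma X_{\exptime}}]$, introducing an independent $\mathbf{e}_\rho\sim\mathrm{Exp}(\rho)$ gives $\ee^{-\rho A_{\exptime}}=\p(\mathbf{e}_\rho>A_{\exptime}\mid X)$, so the occupation penalty acts as an extra killing at rate $\rho$ that runs \emph{only} while $X\in(h,H)$. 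For $x<h$ the band is not yet visited, so I would condition on the first passage $\tau_h^+$ and use the strong Markov property together with the lack of memory of the clock; writing $\e_x[\ee^{\gamma X_{\exptime}}]=\tfrac{\alpha}{\alpha-G(\gamma)}\ee^{\gamma x}=-c_L\ee^{\gamma x}$ (finite because $G(\gamma)<\alpha$) this yields the consistency equation
\[
w(x) = -c_L \ee^{\gamma x} + \e_x\!\left[ \ee^{-\alpha \tau_h^+}\left( w(X_{\tau_h^+}) + c_L \ee^{\gamma X_{\tau_h^+}} \right) \right], \qquad x<h .
\]
Symmetrically for $x>H$ (with $\tau_H^-$ and $c_U$), and for $h<x<H$ with the two-sided exit $\tau_h^-\wedge\tau_H^+$ at the elevated rate $\alpha+\rho$, since $A_t=t$ before the band is left; there the clock-rate bookkeeping produces the constant $c_0=\alpha/(G(\gamma)-(\alpha+\rho))$.

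By Theorem~\ref{th1}, each exit expectation above is a linear combination of exponentials $\ee^{\rho_{i,\alpha}x}$ (resp.\ $\ee^{\rho_{i,\alpha+\rho}x}$ inside the band). This dictates the piecewise ansatz: outside the band the homogeneous part uses the roots of $G=\alpha$ that keep $w$ under control ($\beta_{i,\alpha}$ for $x\le h$, $\gamma_{j,\alpha}$ for $x\ge H$), while inside the band both families $\beta_{i,\alpha+\rho}$ and $\gamma_{j,\alpha+\rho}$ of $G=\alpha+\rho$ appear; the particular solutions are $-c_L\ee^{\gamma x}$, $-c_0\ee^{\gamma x}$, $-c_U\ee^{\gamma x}$. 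This accounts for all $2S$ unknown coefficients. Anchoring the middle-band $\beta$-exponentials at $H$ and the $\gamma$-exponentials at $h$ is what will later generate the shift factors in $Z_\beta$ and $Z_\gamma$.

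Next I would substitute this ansatz into the three consistency equations and evaluate every exit expectation via Theorem~\ref{th1}. The function $g=w+c\,\ee^{\gamma\cdot}$ enters only through the functionals appearing in the vectors $J^{H,g}$, $J^{h,g}$: a boundary value and the exponential integrals $\ee^{\eta_i b}\int_b^\infty g(y)\ee^{-\eta_i y}\dd y$, $\ee^{-\theta_j b}\int_{-\infty}^b g(y)\ee^{\theta_j y}\dd y$. Evaluating these on the exponential ansatz produces exactly the entries $1$, $\tfrac{1}{\eta_i-\rho}$, $\tfrac{1}{\theta_j+\rho}$ of $M$ and $N$, while the particular solution $\ee^{\gamma\cdot}$ produces the entries of $V_1$, $V_2$ with the common prefactor $c_U-c_0$ arising from the mismatch between the inside and outside resolvent constants. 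Since $\sigma>0$, continuity of $w$ and of $w'$ at $h$ and $H$ must also hold, giving the remaining (value and derivative) rows of $M$, $N$. Collecting the $S$ conditions at $h$ and the $S$ at $H$, with the middle-band terms evaluated at the opposite boundary (whence the factors $\ee^{\rho_{\cdot,\alpha+\rho}(h-H)}$ in $Z_\beta$, $Z_\gamma$), assembles precisely the block system $BQ=V$.

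The hard part will be the nonlocality of $\mathcal{L}$: the exponentials $\ee^{\rho_{i,\alpha}x}$ whose exponents fall outside $(-\theta_1,\eta_1)$ do \emph{not} solve the integro-differential equation globally, so the three pieces cannot be glued by naive smooth fit alone — the jump (integral) term couples all three regions simultaneously, and it is exactly this coupling that the overshoot rows $\tfrac{1}{\eta_i-\rho}$, $\tfrac{1}{\theta_j+\rho}$ and the off-diagonal blocks $NZ_\beta$, $MZ_\gamma$ encode. Getting these rows and the inter-boundary coupling right, while carefully tracking signs and justifying the strong-Markov/memorylessness decomposition together with the integrability afforded by $0\le\gamma<\min(\eta_1,\theta_1)$ and $G(\gamma)<\alpha$, is the delicate bookkeeping. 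Unique solvability of $BQ=V$ then follows from nonsingularity of $B$ under Assumption~\ref{A:roots}, inherited from the nonsingularity of the exit matrices in Theorem~\ref{th1}.
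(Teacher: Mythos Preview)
Your overall strategy matches the paper's: rewrite $w$ with the exponential clock, split at the first exit from the current region, use the strong Markov property and lack of memory to obtain the three consistency identities, then invoke Theorem~\ref{th1} to force the piecewise exponential ansatz with the particular solutions $-c_L\ee^{\gamma x},-c_0\ee^{\gamma x},-c_U\ee^{\gamma x}$. Up to and including the form~\eqref{E:thews} your plan is exactly what the paper does.

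Where you diverge is in assembling the $2S$ equations. The paper does \emph{not} feed the ansatz back into the consistency equations through the $J$-vectors of Theorem~\ref{th1}. Instead it first observes that the consistency identities force the pointwise PIDE $(\mathcal{L}-\alpha-\rho\ind_{(h,H)})w=-\alpha\ee^{\gamma x}$, then substitutes the piecewise ansatz into that PIDE on each of the three regions. The nonlocal part of $\mathcal{L}$ splits into integrals over the three regions, and after computing them the residual (for $x<h$ and for $x>H$) is a finite linear combination of $\ee^{\eta_j x}$ and $\ee^{\theta_j x}$; setting each coefficient to zero yields precisely the $\eta$- and $\theta$-rows of $M$ and $N$ with the cross-boundary shifts $\ee^{\beta_{i,\alpha+\rho}(h-H)}$, $\ee^{\gamma_{j,\alpha+\rho}(h-H)}$ appearing exactly as in $Z_\beta$, $Z_\gamma$. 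The two value rows come from continuity of $w$ at $h,H$, and the two derivative rows are obtained by substituting the ansatz into the PIDE on the middle region and differentiating twice (not by asserting $C^1$ smooth fit a priori). Your route via the $J$-vectors would produce an equivalent but differently organized system (for instance your $\eta_j$-equation coming from the $x<h$ consistency involves $\nu^U$ through $\int_H^\infty w_3$, whereas the corresponding row of $B$ does not), so recovering the stated $B$ would require further manipulation; the PIDE substitution gives $B$ directly.

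One more point: nonsingularity of $B$ is not ``inherited'' from the exit matrices of Theorem~\ref{th1}. The paper proves it separately (Lemma~\ref{L:inversion}) by showing that $BC=0$ forces the associated exponential sum to solve a homogeneous boundary-value problem whose only solution is zero.
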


In order for the last Theorem to yield an explicit result, we must show that the linear system in~\eqref{E:linear_system} is solvable:
\begin{lemma}\label{L:inversion}
Under Assumption~\ref{A:roots}, for a given value of $\alpha > 0$, the matrix $B$ given in~\eqref{E:matrixB} is invertible.
\end{lemma}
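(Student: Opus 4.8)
The plan is to prove invertibility by showing that the homogeneous system $BQ=0$ admits only the trivial solution; since $B$ is square, this is equivalent to invertibility. The strategy is to read off a kernel vector as the coefficient vector of a \emph{bounded} homogeneous solution of the integro-differential equation governing $w$, and then to force that solution to vanish by a martingale argument. Concretely, to a vector $Q=\left(\omega_i^L;\omega_i^0;\nu_j^0;\nu_j^U\right)$ with $BQ=0$ I would attach the piecewise function obtained from the three-region formula of Theorem~\ref{T:main} with the particular parts $c_L,c_0,c_U$ removed (these correspond precisely to the right-hand side $V$, which vanishes in the homogeneous problem):
$$
u(x)=\begin{cases}
\sum_{i=1}^{m+1}\omega_i^L\,\mathrm{e}^{\beta_{i,\alpha}(x-h)}, & x\le h,\\
-\sum_{i=1}^{m+1}\omega_i^0\,\mathrm{e}^{\beta_{i,\alpha+\rho}(x-H)}-\sum_{j=1}^{n+1}\nu_j^0\,\mathrm{e}^{-\gamma_{j,\alpha+\rho}(x-h)}, & h<x<H,\\
\sum_{j=1}^{n+1}\nu_j^U\,\mathrm{e}^{-\gamma_{j,\alpha}(x-H)}, & x\ge H.
\end{cases}
$$
Because in the two unbounded regions the exponents are taken with the sign that makes the exponentials decay at $\pm\infty$, and the middle region is bounded, the function $u$ is bounded on $\reals$.

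The technical heart — and the step I expect to be the main obstacle — is to verify that the equations $BQ=0$ say \emph{exactly} that $u$ solves the homogeneous equation $\mathcal{L}u(x)=\bigl(\alpha+\rho\,\ind_{\{h<x<H\}}\bigr)u(x)$ on all of $\reals$. Inside each of the three open regions this is automatic, since there $u$ is a combination of exponentials $\mathrm{e}^{\zeta x}$ whose exponents are roots of $G(\zeta)=\alpha$ (outer regions) or $G(\zeta)=\alpha+\rho$ (inner region). The rows of $M$ and $N$ must then be shown to encode the matching across $x=h$ and $x=H$: the first two rows of each block give continuity of $u$ and of $u'$, while the remaining $m+n$ rows (those carrying the factors $1/(\eta_k-\cdot)$ and $1/(\theta_\ell+\cdot)$) encode the conditions on the nonlocal jump integrals $\int u(x+y)f_Y(y)\,\mathrm{d}y$ required for the equation to remain valid across the boundaries. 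Establishing this row-by-row correspondence amounts to repeating, with $g\equiv 0$ and the killing rate switching between $\alpha$ and $\alpha+\rho$, the derivation behind Theorem~\ref{th1}, and it is where the bookkeeping is delicate.

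Granting that $u$ is a bounded solution of the equation, I would conclude by a standard verification argument. By It\^o's formula for the jump-diffusion $X$, the process $t\mapsto\exp\!\bigl(-\int_0^t(\alpha+\rho\,\ind_{\{h<X_s<H\}})\,\mathrm{d}s\bigr)u(X_t)$ is a local martingale; it is bounded, since $u$ is bounded and the exponential weight is at most $\mathrm{e}^{-\alpha t}\le 1$, hence a true martingale. Taking $\e_x$ and letting $t\to\infty$, the weight tends to $0$ (as $\alpha>0$) while $u(X_t)$ stays bounded, so dominated convergence gives $u(x)=0$ for every $x$. Finally, on each region $u$ is a linear combination of exponentials with pairwise distinct exponents, so $u\equiv 0$ forces all the coefficients to vanish, i.e.\ $Q=0$. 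This yields $\ker B=\{0\}$ and hence the invertibility of $B$ under Assumption~\ref{A:roots}.

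As a cross-check and a possible shortcut, I would record the block factorization
$$
B=\begin{pmatrix} M & 0\\ 0 & N \end{pmatrix}\begin{pmatrix} I & M^{-1}NZ_\beta\\ N^{-1}MZ_\gamma & I \end{pmatrix},
$$
which reduces invertibility of $B$ to the nonsingularity of $M$ and $N$ together with that of the Schur complement $I-N^{-1}MZ_\gamma M^{-1}NZ_\beta$; here one could try to exploit that $Z_\beta$ and $Z_\gamma$ each carry a full block of zeros. The probabilistic route above, however, sidesteps these determinant computations entirely and is the one I would carry out in full.
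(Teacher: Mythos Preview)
Your approach is correct and shares the paper's overall skeleton---associate to a putative kernel vector a function that solves the homogeneous equation, show that function must vanish, and then conclude by linear independence of the exponentials---but the execution differs in the key step. The paper works only on the middle interval: it attaches to $C$ the function $V(x)=\sum C_i\,\mathrm{e}^{\rho_i x}$ on $(h,H)$, extends it by zero outside, reads off from $BC=0$ that $V$ solves the boundary value problem $(\mathcal{L}-\alpha-\rho\,\ind_{\{h<x<H\}})\phi=0$ with zero boundary data, and then simply invokes uniqueness of that boundary value problem. You instead build a three-piece bounded function on all of $\reals$ and force it to vanish via a direct martingale/dominated-convergence argument, which is more self-contained and sidesteps any appeal to an unproved uniqueness statement. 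Your explicit reading of the rows of $M$ and $N$ as continuity, $C^1$-matching, and jump-integral compatibility conditions at $h$ and $H$ is considerably more detailed than what the paper records, and your Schur-complement remark does not appear in the paper at all.
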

\begin{proof}
Let $S=m+n+2$ and assume that $BC=0$ for some vector $C=\left(C_1,C_2,\ldots,C_{2S}\right)$. Consider the function $V(x)=\sum_{i=1}^{2S}C_i \mathrm{e}^{\rho_i x}$ for $x\in(h,H)$, and $V(x)=0$ otherwise, with $\rho_{1},\ldots,\rho_{2S}$ be the distinct real zeros of the equation $G(x)=\alpha$. Since $BC=0$  and  $V(x)$ is a solution to the boundary value problem 
\begin{multline}\label{phi1}
\begin{cases}
\left( \mathcal{L}-\alpha-\rho \ind_{\{h<x < H\}} \right) \phi(x) = 0 & x \in (h,H) ,\\
\phi(x) = 0 & x \in (-\infty,h] \cup [H,+\infty) .
\end{cases} 
\end{multline}
From the uniqueness of the solution to the boundary value problem~\eqref{phi1}, $V(x) \equiv 0$ on $(h,H)$. Now, since $\{\mathrm{e}^{\rho_i x}, \leq i \leq 2S\}$ are linearly independent then $C=0$ and $B$ is invertible.
\end{proof}

Using the same methodology as in the proof of Theorem~\ref{T:main}, we can prove the following result.

\begin{theorem}\label{T:secondmain}
For any $0\leq\gamma<\min(\eta_1,\theta_1), \rho_1>0, \rho_2>0$ and $G(\gamma)<\alpha$, we have for $h<H$,
\begin{multline}
\int_0^\infty \alpha \mathrm{e}^{-\alpha T} \e_x \left[ \mathrm{e}^{- \rho_1 \int_0^T \ind_{\{X_t \leq h\}} \mathrm{d}t - \rho_2 \int_0^T \ind_{\{X_t \geq H\}} \mathrm{d}t + \gamma X_T} \right] \mathrm{d}T \\
=
\begin{cases}
\sum_{i=1}^{m+1} \omega_{i}^{L} \mathrm{e}^{\beta_{i,\alpha+\rho_1}(x-h)} - c_{L} \mathrm{e}^{\gamma x} , & x\leq h ,\\
- \sum_{i=1}^{m+1}\omega_{i}^{0} \mathrm{e}^{\beta_{i,\alpha}(x-H)} - \sum_{j=1}^{n+1} \nu_{j}^{0} \mathrm{e}^{-\gamma_{j,\alpha}(x-h)} - c_{0} \mathrm{e}^{\gamma x} , & h<x< H ,\\
\sum_{j=1}^{n+1} \nu_j^{U} \mathrm{e}^{-\gamma_{j,a+\rho_2}(x-H)} - c_{U} \mathrm{e}^{\gamma x}, &  x\geq H ,
\end{cases} 
\end{multline}
where
$$
c_L=\frac{\alpha}{G(\gamma)-(\alpha+\rho_1)}, \quad c_0=\frac{\alpha}{G(\gamma)-\alpha}, \quad \text{and} \quad c_U=\frac{\alpha}{G(\gamma)-(\alpha+\rho_2)} .
$$
The vector of coefficients 
$$
Q^\prime = \left( \omega_{i}^{L}; w_i^0; \nu_j^0; \nu_j^{U} ; i=1,\ldots,m+1; j=1,\ldots,n+1 \right)
$$
satisfies a linear system
$$
B^\prime Q^\prime = V .
$$  
Here $V$ is the vector defined in~\eqref{V}, $B^\prime$ is a $2S \times 2S$ matrix given by
\begin{equation*}
B^\prime = \left(
\begin{array}{cc}
M^\prime & N^\prime Z_{\beta}\\
M^\prime Z_{\gamma} & N^\prime
\end{array}
\right) ,
\end{equation*}
where $Z_{\beta}$ and $Z_{\gamma}$ are $S \times S$ diagonal matrices with elements
$$
\left\lbrace 0,\ldots,0, \mathrm{e}^{\beta_{1,\alpha}(h-H)},\ldots, \mathrm{e}^{\beta_{m+1,\alpha}(h-H)} \right\rbrace ,
$$
and
$$
\left\lbrace 0,\ldots,0, \mathrm{e}^{\gamma_{1,\alpha}(h-H)},\ldots, \mathrm{e}^{\gamma_{n+1,\alpha}(h-H)} \right\rbrace ,
$$
respectively, and where $M^\prime$ and $N^\prime$ are given by
\begin{equation*}
M^\prime = \left(
\begin{array}{cccccc}
1 & \cdots & 1 &1& \cdots & 1\\
\beta_{1,\alpha+\rho_1} & \cdots & \beta_{m+1,\alpha+\rho_1} & -\gamma_{1,\alpha} & \cdots & -\gamma_{n+1,\alpha}\\
\frac{1}{\eta_1-\beta_{1,\alpha+\rho_1}} & \cdots & \frac{1}{\eta_{1}-\beta_{m+1,\alpha+\rho_1}} & \frac{1}{\eta_{1}+\gamma_{1,\alpha}} & \cdots & \frac{1}{\eta_{1}+\gamma_{n+1,\alpha}}\\
\vdots & \ddots & \vdots &  \vdots & \ddots &\vdots \\
\frac{1}{\eta_m-\beta_{1,\alpha+\rho_1}} & \cdots & \frac{1}{\eta_m-\beta_{m+1,\alpha+\rho_1}} & \frac{1}{\eta_m+\gamma_{1,\alpha}} & \cdots & \frac{1}{\eta_m+\gamma_{n+1,\alpha}}\\
\frac{1}{\theta_{1}+\beta_{1,\alpha+\rho_1}} & \cdots & \frac{1}{\theta_{1}+\beta_{m+1,\alpha+\rho_1}} & \frac{1}{\theta_{1}-\gamma_{1,\alpha}} & \cdots & \frac{1}{\theta_{1}-\gamma_{n+1,\alpha}}\\
\vdots & \ddots & \vdots &  \vdots & \ddots &\vdots \\
\frac{1}{\theta_{n}+\beta_{1,\alpha+\rho_1}} & \cdots & \frac{1}{\theta_{n}+\beta_{m+1,\alpha+\rho_1}} & \frac{1}{\theta_{n}-\gamma_{1,\alpha}} & \cdots & \frac{1}{\theta_{n}-\gamma_{n+1,\alpha}}
\end{array}
\right)
\end{equation*}
and
\begin{equation*}
N^\prime = \left(
\begin{array}{cccccc}
1 & \cdots & 1 & 1 & \cdots & 1\\
-\gamma_{1,\alpha+\rho_2} & \cdots & -\gamma_{n+1,\alpha+\rho_2} & \beta_{1,\alpha} & \cdots & \beta_{m+1,\alpha}\\
\frac{1}{\eta_1+\gamma_{1,\alpha+\rho_2}} & \cdots & \frac{1}{\eta_{1}+\gamma_{n+1,\alpha+\rho_2}} & \frac{1}{\eta_{1}-\beta_{1,\alpha}} & \cdots & \frac{1}{\eta_{1}-\beta_{m+1,\alpha}}\\
\vdots & \ddots & \vdots &  \vdots & \ddots &\vdots \\
\frac{1}{\eta_{m}+\gamma_{1,\alpha+\rho_2}} & \cdots & \frac{1}{\eta_m+\gamma_{n+1,\alpha+\rho_2}} & \frac{1}{\eta_m-\beta_{1,\alpha}} & \cdots & \frac{1}{\eta_{m}-\beta_{m+1,\alpha}}\\
\frac{1}{\theta_{1}-\gamma_{1,\alpha+\rho_2}} & \cdots & \frac{1}{\theta_{1}-\gamma_{n+1,\alpha+\rho_2}} & \frac{1}{\theta_{1}+\beta_{1,\alpha}} & \cdots & \frac{1}{\theta_{1}+\beta_{m+1,\alpha}}\\
\vdots & \ddots & \vdots &  \vdots & \ddots &\vdots \\
\frac{1}{\theta_{n}-\gamma_{1,\alpha+\rho_2}} & \cdots & \frac{1}{\theta_{n}-\gamma_{n+1,\alpha+\rho_2}} & \frac{1}{\theta_{n}+\beta_{1,\alpha}} & \cdots & \frac{1}{\theta_{n}+\beta_{m+1,\alpha}}
\end{array}
\right) .
\end{equation*}
\end{theorem}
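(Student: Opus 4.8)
I would follow the same route as for Theorem~\ref{T:main}, the only difference being that the constant killing rate felt by the process is now $\alpha+\rho_1$ on $(-\infty,h]$, $\alpha$ on $(h,H)$ and $\alpha+\rho_2$ on $[H,\infty)$, instead of $\alpha$ outside and $\alpha+\rho$ inside. Write $w(x)=\e_x[\exp\{-\rho_1\int_0^{\exptime}\ind_{\{X_t\le h\}}\dd t-\rho_2\int_0^{\exptime}\ind_{\{X_t\ge H\}}\dd t+\gamma X_{\exptime}\}]$ for the Laplace--Carson transform on the left-hand side, where $\exptime$ is an independent exponential random variable of rate $\alpha$. The first step is to establish, by a Feynman-Ka\v{c} argument (equivalently, by conditioning on the first exit from the current region and combining the lack of memory of $\exptime$ with the strong Markov property), that $w$ solves
\[
\big(\mathcal{L}-\alpha-\rho_1\ind_{\{x\le h\}}-\rho_2\ind_{\{x\ge H\}}\big)w(x)=-\alpha\,\ee^{\gamma x},\qquad x\in\reals,
\]
where $\mathcal{L}$ is the generator~\eqref{E:generator}; the hypotheses $0\le\gamma<\min(\eta_1,\theta_1)$ and $G(\gamma)<\alpha$ ensure that $\mathcal{L}\ee^{\gamma\cdot}=G(\gamma)\ee^{\gamma\cdot}$ is finite and that the forcing term has strictly smaller rate than the killing. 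The solution is further required to stay bounded as $x\to-\infty$ and to be of order $\ee^{\gamma x}$ as $x\to+\infty$; this growth prescription is what will select the admissible roots below.

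Next I would solve the equation region by region. On each interval the effective rate is constant, so the admissible solution is a particular term $c\,\ee^{\gamma x}$ plus a linear combination of exponentials $\ee^{\rho x}$ with $G(\rho)$ equal to that rate. Since $\mathcal{L}\ee^{\gamma x}=G(\gamma)\ee^{\gamma x}$, the constants are forced to be exactly $c_L=\alpha/(G(\gamma)-(\alpha+\rho_1))$, $c_0=\alpha/(G(\gamma)-\alpha)$ and $c_U=\alpha/(G(\gamma)-(\alpha+\rho_2))$. For the homogeneous part, the growth prescription selects the $m+1$ positive roots $\beta_{i,\alpha+\rho_1}$ of $G=\alpha+\rho_1$ on $(-\infty,h]$, all $S$ roots of $G=\alpha$ on $(h,H)$, and the $n+1$ negative roots $\gamma_{j,\alpha+\rho_2}$ of $G=\alpha+\rho_2$ on $[H,\infty)$; here Assumption~\ref{A:roots}, used now at the shifted rates $\alpha+\rho_1$ and $\alpha+\rho_2$, guarantees the expected root count. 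Recentering each exponential at the nearer boundary reproduces the three-line piecewise expression in the statement, with the $2S$ unknowns $\omega_i^L,\omega_i^0,\nu_j^0,\nu_j^U$.

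The coefficients are then pinned down by forcing this piecewise candidate to be a genuine global solution across the interfaces $h$ and $H$. Because $\sigma>0$, smooth fit gives continuity of $w$ and of $w'$ at each boundary; the remaining $m+n$ conditions per boundary come from the nonlocal part of $\mathcal{L}$: when $\lambda\int(w(x+y)-w(x))f_Y(y)\dd y$ is evaluated with the piecewise-exponential $w$, the mixed-exponential density truncated at the interfaces produces boundary terms carrying the factors $(\eta_i-\rho)^{-1}$ and $(\theta_j+\rho)^{-1}$, and requiring their consistent cancellation across each interface yields precisely the rows of $M'$ and $N'$. Collecting the $2S$ conditions gives the linear system $B'Q'=V$, with $V$ produced by passing the inhomogeneous $\ee^{\gamma x}$-data through the same boundary functionals and the diagonal blocks $Z_\beta,Z_\gamma$ encoding the recentering factors $\ee^{\rho(h-H)}$ between the two boundaries. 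The interchange of the rates $\alpha,\alpha+\rho_1,\alpha+\rho_2$ relative to Theorem~\ref{T:main} is exactly what turns $M,N$ into $M',N'$.

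Finally, solvability follows from the argument of Lemma~\ref{L:inversion}: a vector $C$ with $B'C=0$ would make the associated piecewise-exponential function a solution of the homogeneous problem $\big(\mathcal{L}-\alpha-\rho_1\ind_{\{x\le h\}}-\rho_2\ind_{\{x\ge H\}}\big)\phi=0$ that is bounded at $-\infty$ and decaying at $+\infty$; the representation $\phi(x)=\e_x[\ee^{-\alpha T-\cdots}\phi(X_T)]$ forces $\phi\equiv0$ as $T\to\infty$, and linear independence of the exponentials then gives $C=0$. I expect the main obstacle to be the bookkeeping of these nonlocal matching conditions --- checking that the cross-region contributions of the jump integral assemble into exactly the stated rows of $M'$ and $N'$, with the three rates correctly slotted into each region --- together with the (routine but delicate) verification that the constructed $C^1$ solution of the a.e.\ equation indeed coincides with the probabilistic quantity $w$.
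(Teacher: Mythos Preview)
Your proposal is correct and follows essentially the same approach as the paper, which for Theorem~\ref{T:secondmain} simply says ``using the same methodology as in the proof of Theorem~\ref{T:main}'' without spelling out details. Your outline---derive the PIDE via strong Markov plus memorylessness, identify the regional exponents from $G(\zeta)=\alpha+\rho_1,\alpha,\alpha+\rho_2$ with the growth constraints selecting admissible roots, then extract the $2S$ linear conditions from continuity, derivative matching, and the nonlocal jump-integral boundary terms---is exactly the template of the proof of Theorem~\ref{T:main} with the rates swapped between the interior and exterior regions, and your remark that this swap is precisely what turns $M,N$ into $M',N'$ is the key observation.
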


In Theorem~\ref{T:secondmain}, if we let $H \to h$, then it greatly simplifies the expression:
\begin{corollary}
Given the constants $\rho_1>0, \rho_2>0, \gamma \geq 0$ and $ \alpha>0$ such that $\gamma < \min \left( \eta_1, \theta_1 \right)$ and $G(\gamma) < \alpha$, we have
%
%
\begin{multline*}
\int_0^\infty \alpha \mathrm{e}^{-\alpha T} \e_x \left[ \mathrm{e}^{-\rho_1 \int_{0}^{T} \ind_{\{X_s \leq h\} \mathrm{d}s} - \rho_2 \int_{0}^{T} \ind_{\{X_s \geq h\}} + \gamma X_{T}} \right] \mathrm{d}T \\
=
\begin{cases}
\sum_{i=1}^{m+1} \omega_{i} \mathrm{e}^{\beta_{i,\alpha+\rho_1}(x-h)} - c_{1} \mathrm{e}^{\gamma x} , & x \leq h ,\\
\sum_{j=1}^{n+1} \nu_j \mathrm{e}^{-\gamma_{j,\alpha+\rho_2}(x-h)} - c_{2} \mathrm{e}^{\gamma x}, & x > h ,
\end{cases} 
\end{multline*}
where
$$
c_1 = \frac{\alpha}{G(\gamma)-(\alpha + \rho_1)} , \quad c_{2} = \frac{\alpha}{G(\gamma)-(\alpha+\rho_2)} . 
$$
For $i=1,\ldots,m+1$,
\begin{eqnarray*}
&&\omega_{i}=\frac{\prod_{j=1,j\neq i}^{m+1}(\beta_{j,\alpha+\rho_1}-\gamma)\prod_{k=1}^{n+1}(-\gamma_{k,\alpha+\rho_2}-\gamma)\prod_{j=1}^{m}(\eta_j-\beta_{i,\alpha+\rho_1})\prod_{k=1}^{n}(\theta_k+\beta_{i,\alpha+\rho_1})}{\prod_{j=1,j\neq i}^{m+1}(\beta_{j,\alpha+\rho_1}-\beta_{i,\alpha+\rho_1})\prod_{k=1}^{n+1}(-\gamma_{k,\alpha+\rho_2}-\beta_{i,\alpha+\rho_1})\prod_{j=1}^{m}(\eta_j-\gamma)\prod_{k=1}^{n}(\theta_k+\gamma)} c_{12}, 
\end{eqnarray*}
and, for $i=1,\ldots,n+1$,
\begin{eqnarray*}
&&\nu_i=\frac{\prod_{j=1}^{m+1}(\beta_{j,\alpha+\rho_1}-\gamma)\prod_{k=1,k\neq i}^{n+1}(-\gamma_{k,\alpha+\rho_2}-\gamma)\prod_{j=1}^{m}(\eta_j+\gamma_{i,\alpha+\rho_2})\prod_{k=1}^{n}(\theta_k-\gamma_{i,\alpha+\rho_2})}{\prod_{j=1}^{m+1}(\beta_{j,\alpha+\rho_1}+\gamma_{i,\alpha+\rho_2})\prod_{k=1,k\neq i}^{n+1}(-\gamma_{k,\alpha+\rho_2}+\gamma_{i,\alpha+\rho_2})\prod_{j=1}^{m}(\eta_j-\gamma)\prod_{k=1}^{n}(\theta_k+\gamma)} c_{12},
\end{eqnarray*}
with $c_{12}=c_1-c_2$.
\end{corollary}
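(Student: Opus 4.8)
The plan is to obtain the corollary as the degenerate limit $H\downarrow h$ of Theorem~\ref{T:secondmain}, and then to solve the resulting (much smaller) linear system in closed form. First I would read off the shape of the limit. As $H\downarrow h$ the middle strip $(h,H)$ shrinks to the single point $h$, so only the two outer branches survive; since $c_L$ and $c_U$ do not depend on $H$ and the exponential arguments $x-h$ and $x-H$ both tend to $x-h$, the limiting transform has exactly the two-piece form claimed, with $c_1=c_L$, $c_2=c_U$, $\omega_i:=\lim_{H\downarrow h}\omega_i^{L}$ and $\nu_j:=\lim_{H\downarrow h}\nu_j^{U}$. The middle-region data $\omega_i^0,\nu_j^0,c_0$ disappear from the answer, and one sees that the effective right-hand side is governed by $c_{12}=c_1-c_2$ rather than by the prefactor $c_U-c_0$ visible in $V$ from~\eqref{V}.

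To pin down the surviving coefficients explicitly I would re-pose the problem directly at $H=h$ rather than substitute $H=h$ into $B^\prime$ (see the obstacle below). The limiting transform $w$ is the unique bounded solution of $(\mathcal{L}-\alpha-\rho_1\ind_{\{x\le h\}}-\rho_2\ind_{\{x>h\}})w(x)=-\alpha\ee^{\gamma x}$, and I would verify that the announced ansatz solves it. Each branch solves the correct local integro-differential equation because $\{\beta_{i,\alpha+\rho_1}\}$ and $\{\gamma_{j,\alpha+\rho_2}\}$ are precisely the roots of $G(\zeta)=\alpha+\rho_1$ and $G(\zeta)=\alpha+\rho_2$ that keep the solution bounded on $(-\infty,h]$ and $[h,\infty)$ respectively, while the particular terms $-c_1\ee^{\gamma x}$ and $-c_2\ee^{\gamma x}$ absorb the forcing (using $c_1=\alpha/(G(\gamma)-\alpha-\rho_1)$ and $c_2=\alpha/(G(\gamma)-\alpha-\rho_2)$). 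The $S=m+n+2$ unknowns $(\omega_i,\nu_j)$ are then fixed by $S$ interface conditions at $h$: continuity, $C^1$ smooth fit (legitimate since $\sigma>0$), and the $m+n$ compatibility conditions forced by the exponential jump kernel, namely the cancellation of the spurious exponentials produced by the integral operator across the interface, one per $\eta_l$ and per $\theta_l$. This is the reduced, genuinely $S\times S$ analogue of the system~\eqref{E:linear_system}.

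Finally I would solve this reduced system in closed form. Its coefficient matrix has bordered-Cauchy structure, with rows consisting of $1$, of $\beta_{i,\alpha+\rho_1}$ and $-\gamma_{j,\alpha+\rho_2}$, and of the Cauchy entries $1/(\eta_l-\beta_{i,\alpha+\rho_1})$, $1/(\eta_l+\gamma_{j,\alpha+\rho_2})$, $1/(\theta_l+\beta_{i,\alpha+\rho_1})$, $1/(\theta_l-\gamma_{j,\alpha+\rho_2})$, so Cramer's rule together with the Cauchy determinant evaluation collapses each coefficient to the stated ratio of products. Equivalently, and as a useful cross-check, one verifies that the announced $\omega_i$ and $\nu_j$ are residues of a single explicit rational function built from $G-\alpha-\rho_1$ and $G-\alpha-\rho_2$ (for a suitable rational $P$ one finds $\omega_i=c_{12}\,P(\gamma)\,\mathrm{Res}_{\zeta=\beta_{i,\alpha+\rho_1}}[((\gamma-\zeta)\,P(\zeta))^{-1}]$, and symmetrically for $\nu_j$), whereupon the interface conditions reduce to a partial-fraction identity.

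The main obstacle is the closed-form inversion, not the limit itself. Two points deserve care. First, one cannot simply set $H=h$ in $B^\prime$: as $H\downarrow h$ the diagonal blocks $Z_\beta,Z_\gamma$ tend to projections with unit entries, the two interface rows (at $h$ and at $H$) merge, and $B^\prime$ becomes singular, consistently with the collapse of three regions into two, which is exactly why the reduced problem must be re-derived from scratch at $H=h$. Second, extracting the product formulas requires careful bookkeeping of signs in the Cauchy--Vandermonde determinant expansion and of the recombination of $c_U-c_0$ into $c_{12}$; the non-degeneracy of the reduced $S\times S$ matrix can be obtained by the same boundary-value-problem uniqueness argument used in Lemma~\ref{L:inversion}.
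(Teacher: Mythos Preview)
Your proposal is correct and follows essentially the same route as the paper: let $H\to h$ in Theorem~\ref{T:secondmain} to collapse the three-region system to an $S\times S$ system with bordered--Cauchy structure, and then invert that system explicitly via the Cauchy/Cramer determinant evaluation. The paper's proof is terser---it simply records the determinant of the reduced matrix $A$, writes down the solution $y_i^*$ of $AY=\Delta$, and says ``applying the above to Theorem~\ref{T:secondmain} when $H\to h$ completes the proof''---so your more careful discussion of why one should re-pose the interface problem at $H=h$ (rather than naively set $H=h$ in $B'$, which degenerates) is a welcome elaboration of a step the paper leaves implicit, not a different argument.
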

\begin{proof}
By Gauss elimination, we can show that the determinant of
\begin{equation*}
A=\left(\begin{array}{cccc}
1 & 1 &  \cdots & 1\\
a_{1} & a_{2}& \cdots & a_{S}\\
\frac{1}{\eta_1-a_1} & \frac{1}{\eta_{1}-a_2}  & \cdots & \frac{1}{\eta_{1}-a_{S}}\\
\vdots &  \vdots &  \ddots &\vdots \\
\frac{1}{\eta_{m}-a_1} &  \frac{1}{\eta_{m}-a_2} &  \cdots & \frac{1}{\eta_{m}-a_{S}}\\
\frac{1}{\theta_{1}+a_1} & \frac{1}{\theta_{1}+a_2}  & \cdots & \frac{1}{\theta_{1}+a_{S}}\\
\vdots &  \vdots  & \ddots &\vdots \\
\frac{1}{\theta_{n}+a_1} & \frac{1}{\theta_{n}+a_2}  & \cdots & \frac{1}{\theta_{n}+a_{S}}
\end{array}\right)
\end{equation*}
where $S=m+n+2$, is given by
$$
\det(A)=-\frac{(\sum_{i=1}^{m}\eta_i+\sum_{j=1}^{n}\theta_j)\prod_{1\leq i<j\leq S}(a_i-a_j)}{\prod_{1\leq i\leq S, 1\leq j\leq S}(\prod_{1\leq k\leq m}(\eta_k-a_i)\prod_{1\leq l\leq n}(\theta_l+a_j))} .
$$
If $\det(A) \neq 0$, then the matrix $A$ is invertible and, for the column vector
$$
\Delta= \left( 1, \gamma, \frac{1}{\eta_{1}-\gamma},\ldots,\frac{1}{\eta_{m}-\gamma}, \frac{1}{\theta_{1}+\gamma},\ldots, \frac{1}{\theta_{n}+\gamma} \right) ,
$$
the linear system $A Y = \Delta$ has a unique solution $Y^{*}= \left( y_{1}^{*},\ldots,y_{S}^{*} \right)$, where, for $i = 1,2,\dots, S$,
$$
y_{i}^{*} = \frac{\prod_{j\neq i}(a_j-\gamma)\prod_{k=1}^{m}(\eta_k-a_i)\prod_{l=1}^{n}(\theta_k+a_i)}{\prod_{j\neq i}(a_j-a_i)\prod_{k=1}^{m}(\eta_k-\gamma)\prod_{l=1}^{n}(\theta_k+\gamma)} .
$$ 
Applying the above to Theorem~\ref{T:secondmain} when $H \to h$ completes the proof.
\end{proof}

\section{Occupation-time option pricing}

We now show how our theoretical results can be easily applied to the pricing of various occupation-time options. The idea is to obtained explicit expressions for (double) Laplace transforms of option prices, which can then be inverted using well-known and well-studied Laplace inversion techniques to get numerical prices; see e.g.\ \cite{kou_et_al_2005} and \cite{cai_kou_2011} and the references therein. Note that using this methodology, together with the results of Section 3, many other (and more complicated) occupation-time derivatives could be analyzed.

\subsection{Step and double-barrier step options}

As already mentioned in the Introduction, a (down-and-out call) step option admits the following payoff:
$$
\mathrm{e}^{- \rho \int_0^T \mathbf{1}_{\{S_t \leq L\}} \mathrm{d}t } \left( S_T - K \right)^+ = \mathrm{e}^{- \rho \int_0^T \mathbf{1}_{\{X_t \leq \ln(L/S_0)\}} \mathrm{d}t } \left( S_0 \mathrm{e}^{X_T} - K \right)^+ .
$$
Then, its price can be written as
$$
C^{\text{step}} (k,T) := \mathrm{e}^{-r T} \e \left[ \mathrm{e}^{- \rho \int_0^T \mathbf{1}_{\{X_t \leq \ln(L/S_0)\}} \mathrm{d}t } \left( S_0 \mathrm{e}^{X_T} - \mathrm{e}^{-k} \right)^+ \right] ,
$$
where $k=-\ln(K)$. Following Carr and Madan's approach for vanilla options, as in \cite{carr_madan_1999} (see also \cite{kou_et_al_2005} and \cite{cai_kou_2011}), we can easily compute the double Laplace transform of $C^{\text{step}} (k,T)$:
$$
\int_0^\infty \int_{-\infty}^\infty \mathrm{e}^{- \alpha T - \beta k} C^{\text{step}} (k,T) \mathrm{d}k \mathrm{d}T = \mathrm{e}^{-r T} \frac{S_0^{\beta+1} w \left( \ln(S_0) ; -\infty, \ln(L/S_0), \alpha, \rho, \beta+1 \right)}{\alpha \beta (\beta+1)} ,
$$
where $w(x;h,H,\alpha,\rho,\gamma)$ is given by Theorem~\ref{T:main}. Note that the double Laplace transform of delta of the latter option can be easily obtained from the above.

Recall that the payoff of a double-barrier step option is given by
$$
\mathrm{e}^{- \rho^- \int_0^T \mathbf{1}_{\{S_t \leq L\}} \mathrm{d}t - \rho^+ \int_0^T \mathbf{1}_{\{S_t \geq U\}} \mathrm{d}t} \left( S_T - K \right)_+ ,
$$
where $\rho^-$ and $\rho^+$ are the knock-out rates. Then, its price can be written as
$$
C^{\text{double}} (k,T) := \mathrm{e}^{-r T} \e \left[ \mathrm{e}^{- \rho^- \int_0^T \mathbf{1}_{\{X_t \leq \ln(L/S_0)\}} \mathrm{d}t - \rho^+ \int_0^T \mathbf{1}_{\{X_t \geq \ln(U/S_0)\}} \mathrm{d}t} \left( S_0 \mathrm{e}^{X_T} - \mathrm{e}^{-k} \right)^+ \right] ,
$$
where $k=-\ln(K)$. Again, we can easily compute its double Laplace transform:
$$
\int_0^\infty \int_{-\infty}^\infty \mathrm{e}^{- \alpha T - \beta k} C^{\text{double}} (k,T) \mathrm{d}k \mathrm{d}T = \mathrm{e}^{-r T} \frac{S_0^{\beta+1} w \left( \ln(S_0) ; \ln(L/S_0), \ln(U/S_0)\alpha, \rho^-, \rho^+, \beta+1 \right)}{\alpha \beta (\beta+1)} ,
$$
where an explicit expression for
$$
w(x ; h, H, \alpha, \rho^-, \rho^+, \gamma) = \int_0^\infty \alpha \mathrm{e}^{-\alpha T} \e_x \left[ \mathrm{e}^{- \rho_1 \int_0^T \ind_{\{X_t \leq h\}} \mathrm{d}t - \rho_2 \int_0^T \ind_{\{X_t \geq H\}} \mathrm{d}t + \gamma X_T} \right] \mathrm{d}T
$$
is given by Theorem~\ref{T:secondmain}.


\subsection{Quantile options}

Recall from the Introduction that a fixed-strike $\alpha$-quantile call option admits the following payoff: for $0 \leq \alpha \leq 1$,
$$
\left( S_0 \mathrm{e}^{\gamma q(\alpha,T)}  - K \right)^+ ,
$$
where
$$
q(\alpha,T) = \inf \left\lbrace h \colon \int_0^T \mathbf{1}_{\{X_t \leq h\}} \mathrm{d}t > \alpha T \right\rbrace .
$$
For any $0 \leq \upsilon \leq T$, the price of this $\alpha$-quantile call option can be written as
$$
C^{\text{quantile}} (\upsilon,T) = \mathrm{e}^{-rT} \e \left[ (S_0 \mathrm{e}^{\lambda q(\upsilon/T,T)} - K )_+ \right] .
$$
Then, the double Laplace transform of $C^{\text{quantile}} (\upsilon,T)$ is given by 
\begin{multline*}
\int_{0}^{\infty}\int_{0}^{\infty} \mathrm{e}^{- \alpha T - \rho \upsilon} C^{\text{quantile}} (\upsilon,T) \ind_{\{\upsilon <T\}} \mathrm{d}T \mathrm{d}\upsilon \\
=
\begin{cases}
\sum_{i=1}^{m+1} \frac{\lambda K}{\rho}\frac{\omega_i}{\beta_{i,\alpha+\rho} - \lambda}(S_0/K)^{(\beta_{i,\alpha+\rho}+\lambda)/\lambda} & \text{if $K \leq S_0$,}\\
\sum_{i=1}^{m+1} \frac{\lambda K}{\rho}\frac{\omega_i}{\beta_{i,\alpha+\rho} - \lambda}(S_0/K)^{(\beta_{i,\alpha+\rho}+\lambda)/\lambda} & \\
\quad - \sum_{j=1}^{n+1} \frac{\lambda K}{\rho}\frac{\nu_j}{\gamma_{j,\alpha}+\lambda} (1-(S_0/K)^{(\gamma_{i,\alpha+\rho}+\lambda)/\lambda}) + \frac{(S_{0}-K)}{\alpha(\alpha+\rho)} & \text{if $K < S_0$,}
\end{cases} 
\end{multline*}
where $\{\omega_i, i=1,\ldots,m+1\}$ and $\{\nu_j, j=1,\ldots,n+1\}$ are given by Theorem~\ref{T:main}.

\section{Acknowledgements}
Funding in support of this work was provided by an ENC grant from the Fonds de recherche du Qu\'ebec - Nature et technologies (FRQNT). J.-F. Renaud also thanks the Insti\-tut de finance math\'ematique de Montr\'eal (IFM2) for its financial support.

\section{Proof of Theorem~\ref{T:main}}

We have for $x<h$
\begin{eqnarray}
\nonumber w(x) &=& \e_x \left[ \mathrm{e}^{-\rho\int_{0}^{\exptime} \ind_{\{h<X_s< H\}} \mathrm{d}s + \gamma X_{\exptime}} \right] \\
\nonumber&=& \e_x \left[ \mathrm{e}^{-\rho \int_{0}^{\exptime} \ind_{\{h<X_s< H\}} \mathrm{d}s + \gamma X_{\exptime}}, \exptime < \tau_h^+ \right] + \e_x \left[ \mathrm{e}^{-\rho \int_{0}^{\exptime} \ind_{\{h<X_s\leq H\}} \mathrm{d}s + \gamma X_{\exptime}} , \tau_h^+ \leq \exptime \right] \\
\nonumber &=& \e_x \left[ \mathrm{e}^{\gamma X_{\exptime}}, \exptime < \tau_h^+ \right] + \e_x \left[ \mathrm{e}^{-\rho \int_{0}^{\exptime} \ind_{\{h<X_s \leq H\}} \mathrm{d}s + \gamma X_{\exptime}} , \tau_h^+ \leq \exptime \right] \\
\label{mark} &=& \e_x \left[ \int_0^{\tau_h^+} \alpha \mathrm{e}^{-\alpha s +\gamma X_s} \mathrm{d}s \right] + \e_x \left[ \mathrm{e}^{-\rho \int_{0}^{\exptime} \ind_{\{h< X_s \leq H\}} \mathrm{d}s + \gamma X_{\exptime}} , \tau_h^+ \leq \exptime \right] .
\end{eqnarray}

Applying It\^o's formula to the process $\{\mathrm{e}^{-\alpha t +\gamma X_t}, t \geq 0\}$, we obtain that the process
\begin{eqnarray*}
M_t &:=& \mathrm{e}^{-\alpha(t\wedge \tau^{+}_h) + \gamma X_{t\wedge \tau^{+}_h}} - \mathrm{e}^{\gamma X_0} - \int_{0}^{t\wedge \tau^{+}_h} \mathrm{e}^{-\alpha s}\left(-\alpha \mathrm{e}^{\gamma X_s}+\mathcal{L} \mathrm{e}^{\gamma X_s}\right) \mathrm{d}s\\
&=& \mathrm{e}^{-\alpha(t\wedge \tau^{+}_h)+\gamma X_{t\wedge \tau^{+}_h}} - \mathrm{e}^{\gamma x}-(G(\gamma)-\alpha) \int_{0}^{t\wedge \tau^{+}_h} \mathrm{e}^{-\alpha s+\gamma X_s} \mathrm{d}s,
\end{eqnarray*}
is a local martingale starting from $M_0=0$. Since $G(\gamma)<\alpha$, it follows from Fubini's theorem that
$$
\e\left[\int_{0}^{t} \mathrm{e}^{-\alpha s+\gamma X_s} \mathrm{d}s \right] = \int_{0}^{t} \mathrm{e}^{-\alpha s} \e \left[ \mathrm{e}^{\gamma X_s} \right] \mathrm{d}s = \int_{0}^{t} \mathrm{e}^{(-\alpha+G(\gamma)) s} \mathrm{d}s = \frac{\mathrm{e}^{(-\alpha+G(\gamma))t}-1}{(-\alpha+G(\gamma))} < \infty ,
$$
for all $t \geq 0$. So, using Lebesgue's dominated convergence theorem, we have that $\{M_t , t\geq 0\}$ is actually a martingale. In particular,
\begin{equation}\label{covth}
\e_x \left[ \mathrm{e}^{-\alpha\tau_h^{+}+\gamma X_{\tau_{h}^{+}}} - \mathrm{e}^{\gamma x}\right] = \left( G(\gamma)-\alpha \right) \e_x \left[ \int_{0}^{ \tau^{+}_h} \mathrm{e}^{-\alpha s+\gamma X_s} \mathrm{d}s \right] .
\end{equation}
Plugging~\eqref{covth} into~\eqref{mark}, we get, by the strong Markov property of $X$ and the lack-of-memory property of $\exptime$, that
\begin{equation}\label{gamma1}
w(x) = \frac{\alpha}{G(\gamma)-\alpha} \left( \e_x \left[ \mathrm{e}^{-\alpha\tau_h^{+}+ X_{\tau_{h}^{+}}} \right] - \mathrm{e}^{\gamma x} \right) + \e_x \left[ \mathrm{e}^{-\alpha\tau_h^{+}} w\left( X_{\tau_{h}^{+}} \right) \right] .
\end{equation}
Similarly, for $x>H$,
\begin{equation}\label{gamma2}
w(x) = \frac{\alpha}{G(\gamma)-\alpha} \left( \e_x \left[ \mathrm{e}^{-\alpha\tau_h^{-}+\gamma X_{\tau_{h}^{-}}} \right] - \mathrm{e}^{\gamma x}\right) + \e_x \left[ \mathrm{e}^{-\alpha\tau_h^{-}} w\left( X_{\tau_{h}^{-}} \right)  \right] ,
\end{equation}
and, for $h \leq x\leq H$,
\begin{equation}\label{gamma3}
w(x) = \frac{\alpha}{G(\gamma)-(\alpha+\rho)} \left( \e_x \left[ \mathrm{e}^{-(\alpha+\rho)\tau+\gamma X_{\tau}} \right] - \mathrm{e}^{\gamma x}\right) + \e_x \left[ \mathrm{e}^{-(\rho+\alpha)\tau} w \left( X_{\tau} \right) \right] .
\end{equation}

Define the following:
\begin{equation*}
w(x) =
\begin{cases}
w_1 (x) , & x \leq h ,\\
w_2 (x) , & h<x< H ,\\
w_3 (x) , & x \geq H ,
\end{cases} 
\end{equation*}
Combining Theorem~\ref{th1} with equations~\eqref{gamma1}, \eqref{gamma2} and \eqref{gamma3}, we get that $w(x)$ must be of the following form:
\begin{equation}\label{E:thews}
\e_x \left[ \mathrm{e}^{-\rho \int_{0}^{\exptime} \ind_{\{h < X_t < H\}} \mathrm{d}t + \gamma X_{\exptime}} \right] =
\begin{cases}
\sum_{i=1}^{m+1} \omega_{i}^{L} \mathrm{e}^{\beta_{i,\alpha} (x-h)} - c_L \mathrm{e}^{\gamma x} , & x\leq h ,\\
-\sum_{i=1}^{m+1} \omega_{i}^{0} \mathrm{e}^{\beta_{i,\alpha+\rho} (x-H)}\\
 \hspace{1cm} - \sum_{j=1}^{n+1} \nu_{j}^{0} \mathrm{e}^{-\gamma_{j,\alpha+\rho} (x-h)} - c_{0} \mathrm{e}^{\gamma x} , & h<x< H ,\\
\sum_{j=1}^{n+1} \nu_j^{U} \mathrm{e}^{-\gamma_{j,\alpha} (x-H)} - c_U \mathrm{e}^{\gamma x} , & x \geq H ,
\end{cases} 
\end{equation}
with $\omega^{L}_i, \omega^{0}_i, \nu^{0}_j$ and $ \nu^{U}_j $ to be determined. Now, we need equations to determine these coefficients.

Using again equations~\eqref{gamma1}, \eqref{gamma2} and \eqref{gamma3}, we have that $w(x)$ must satisfy
\begin{equation}\label{pb}
\left(\mathcal{L}-\alpha-\rho \ind_{\{h<x < H\}}\right) w(x) = - \alpha \mathrm{e}^{\gamma x} , \quad x \in \mathbb{R} \setminus \{h,H\} .
\end{equation}
Then, equation~\eqref{pb} can be rewritten as three separate equations in the regions $(-\infty,h), (h,H)$ and $(H,+\infty)$.

For $x<h$,
\begin{multline}\label{W1} 
- \alpha \mathrm{e}^{\gamma x} = \frac{\sigma^{2}}{2} w_{1}^{\prime \prime}(x) + \mu w_{1}^{\prime}(x) - (\lambda+\alpha) w_{1}(x) \\
+ \lambda \left\lbrace \int_{-\infty}^{0} w_1(x+y) \sum_{j=1}^{n} q_j \theta_j \mathrm{e}^{\theta_j y} \mathrm{d}y + \int_{0}^{h-x} w_1(x+y) \sum_{j=1}^{m} p_j \eta_j \mathrm{e}^{-\eta_j y} \mathrm{d}y \right. \\
+ \left. \int_{h-x}^{H-x} w_2(x+y) \sum_{j=1}^{m} p_j \eta_j \mathrm{e}^{-\eta_j y} \mathrm{d}y + \int_{H-x}^{+\infty} w_3(x+y) \sum_{j=1}^{m} p_j \eta_j \mathrm{e}^{-\eta_j y} \mathrm{d}y \right\rbrace .
\end{multline}
For $h<x<H$,
\begin{multline}\label{W2}
- \alpha \mathrm{e}^{\gamma x} = \frac{\sigma^{2}}{2}w_{2}^{\prime \prime}(x) + \mu w_{2}^{\prime}(x) - (\lambda+\rho+\alpha) w_{2}(x) \\
+ \lambda \left\lbrace \int_{-\infty}^{h-x} w_1(x+y) \sum_{j=1}^{n} q_j \theta_j \mathrm{e}^{\theta_j y} \mathrm{d}y + \int_{h-x}^{0} w_2(x+y) \sum_{j=1}^{n} q_i \theta_i \mathrm{e}^{\theta_i y} \mathrm{d}y \right. \\
+ \left. \int_{0}^{H-x} w_2(x+y) \sum_{j=1}^{m} p_j \eta_j \mathrm{e}^{-\eta_j y} \mathrm{d}y +\int_{H-x}^{+\infty} w_3(x+y) \sum_{j=1}^{m} p_j \eta_j \mathrm{e}^{-\eta_j y} \mathrm{d}y \right\rbrace .
\end{multline}
And finally, for  $x>H$,
\begin{multline}\label{W3}
- \alpha \mathrm{e}^{\gamma x}  = \frac{\sigma^{2}}{2} w_{3}^{\prime \prime}(x) + \mu w_{3}^{\prime}(x) - (\lambda+\alpha) w_{3}(x) \\
+ \lambda \left\lbrace \int_{-\infty}^{h-x} w_1(x+y) \sum_{j=1}^{n} q_j \theta_j \mathrm{e}^{\theta_j y} \mathrm{d}y +\int_{h-x}^{H-x} w_2(x+y) \sum_{j=1}^{n} q_i \theta_i \mathrm{e}^{\theta_i y} \mathrm{d}y \right. \\
+ \left. \int_{H-x}^{0} w_3(x+y) \sum_{j=1}^{n} q_i \theta_i \mathrm{e}^{\theta_i y} \mathrm{d}y + \int_{0}^{+\infty} w_3(x+y) \sum_{j=1}^{m} p_j \eta_j \mathrm{e}^{-\eta_j y} \mathrm{d}y \right\rbrace .
\end{multline}

Substituting the expression obtained in~\eqref{E:thews} into~\eqref{W1}, \eqref{W2} and \eqref{W3}, we get, for $x<h$,
\begin{multline*}
0 = \sum_{j=1}^{m} p_j \eta_j \mathrm{e}^{\eta_j (x-h)} \left\lbrace
\sum_{i=1}^{m+1} \left( \frac{\omega_{i}^{L}}{\eta_j-\beta_{i,\alpha}}+\frac{\omega_{i}^{0} \mathrm{e}^{\beta_{i,\alpha+\rho}(h-H)}}{\eta_j-\beta_{i,\alpha+\rho}} \right) + \sum_{i=1}^{n+1} \left( \frac{\nu_{i}^{0}}{\eta_j+\gamma_{i,\alpha+\rho}} \right) - (c_L-c_0) \frac{\mathrm{e}^{\gamma h}}{\eta_j-\gamma}
\right\rbrace \\
+ \sum_{j=1}^{m} p_j \eta_j \mathrm{e}^{\eta_j (x-H)} \left\lbrace 
\sum_{i=1}^{m+1} \left( \frac{\omega_{i}^{0}}{\eta_j-\beta_{i,\alpha+\rho}} \right) + \sum_{i=1}^{n+1} \left( \frac{\nu_{i}^{0} \mathrm{e}^{-\gamma_{i,\alpha+\rho}(h-H)}}{\eta_j+\gamma_{i,\alpha+\rho}} + \frac{\nu_{i}^{U}}{\eta_j+\gamma_{i,\alpha}} \right) - (c_L-c_0) \frac{\mathrm{e}^{\gamma H}}{\eta_j-\gamma} 
\right\rbrace .
\end{multline*}
and, for $x>H$,
\begin{multline*}
0 = \sum_{j=1}^{n} q_j \theta_j \mathrm{e}^{\theta_j (h-x)} \left\lbrace 
\sum_{i=1}^{m+1} \left( \frac{\omega_{i}^{L}}{\theta_j+\beta_{i,\alpha}} + \frac{\omega_{i}^{0} \mathrm{e}^{\beta_{i,\alpha+\rho}(h-H)}}{\theta_j+\beta_{i,\alpha+\rho}} \right) + \sum_{i=1}^{n+1} \left( \frac{\nu_{i}^{0}}{\theta_j-\gamma_{i,\alpha+\rho}} \right) - (c_L -c_0) \frac{\mathrm{e}^{\gamma h}}{\theta_j+\gamma} \right\rbrace \\
+ \sum_{j=1}^{n} q_j \theta_j \mathrm{e}^{\theta_j (H-x)} \left\lbrace 
\sum_{i=1}^{m+1} \left( \frac{\omega_{i}^{0}}{\theta_j+\beta_{i,\alpha+\rho}} \right) +\sum_{i=1}^{n+1} \left( \frac{-\nu_{i}^{0} \mathrm{e}^{-\gamma_{i,\alpha+\rho}(h-H)}}{\theta_j-\gamma_{i,\alpha+\rho}} + \frac{\nu_{i}^{U}}{\theta_j-\gamma_{i,\alpha}} \right) - (c_L -c_0) \frac{\mathrm{e}^{\gamma H}}{\theta_j+\gamma} \right\rbrace .
\end{multline*}

Therefore, the vector $Q$ or, in other words, the coefficients $\{\omega_i^{L}, i=1,\dots,m+1\}$, $\{\omega_i^{0}, i=1,\dots,m+1\}$, $\{\nu_i^{0}, i=1,\dots,n+1\}$ and $\{\nu_i^{U}, i=1,\dots,n+1\}$ satisfy the following: for each $j=1,\ldots,m$,
\begin{align*}
0 &= \sum_{i=1}^{m+1} \left( \frac{\omega_{i}^{L}}{\eta_j-\beta_{i,\alpha}} + \frac{\omega_{i}^{0} \mathrm{e}^{\beta_{i,\alpha+\rho}(h-H)}}{\eta_j-\beta_{i,\alpha+\rho}} \right) + \sum_{i=1}^{n+1} \left( \frac{\nu_{i}^{0}}{\eta_j+\gamma_{i,\alpha+\rho}} \right) - (c_L-c_0) \frac{\mathrm{e}^{\gamma h}}{\eta_j-\gamma} ,\\
0 &= \sum_{i=1}^{m+1} \left( \frac{\omega_{i}^{0}}{\eta_j-\beta_{i,\alpha+\rho}} \right) + \sum_{i=1}^{n+1} \left( \frac{-\nu_{i}^{0} \mathrm{e}^{-\gamma_{i,\alpha+\rho}(h-H)}}{\eta_j+\gamma_{i,\alpha+\rho}} + \frac{\nu_{i}^{U}}{\eta_j+\gamma_{i,\alpha}} \right) - (c_L-c_0) \frac{\mathrm{e}^{\gamma H}}{\eta_j-\gamma} ,
\end{align*}
and, for each $j=1,\ldots,n$,
\begin{align*}
0 &= \sum_{i=1}^{m+1} \left( \frac{\omega_{i}^{L}}{\theta_j+\beta_{i,\alpha}} + \frac{\omega_{i}^{0} \mathrm{e}^{\beta_{i,\alpha+\rho}(h-H)}}{\theta_j+\beta_{i,\alpha+\rho}} \right) + \sum_{i=1}^{n+1} \left( \frac{\nu_{i}^{0}}{\theta_j-\gamma_{i,\alpha+\rho}} \right) - (c_L-c_0) \frac{\mathrm{e}^{\gamma h}}{\theta_j+\gamma} ,\\
0 &= \sum_{i=1}^{m+1} \left( \frac{\omega_{i}^{0}}{\theta_j+\beta_{i,\alpha+\rho}} \right) + \sum_{i=1}^{n+1} \left( \frac{-\nu_{i}^{0} \mathrm{e}^{-\gamma_{i,\alpha+\rho}(h-H)}}{\theta_j-\gamma_{i,\alpha+\rho}} + \frac{\nu_{i}^{U}}{\theta_j-\gamma_{i,\alpha}} \right) - (c_L-c_0) \frac{\mathrm{e}^{\gamma H}}{\theta_j+\gamma} .
\end{align*}
In addition, we also have the following four equations:
\begin{eqnarray}
\label{cont1} && \sum_{i=1}^{m+1} \omega^{L}_i - c_L \mathrm{e}^{\gamma h} = \sum_{i=1}^{m+1} - \omega^{0}_i \mathrm{e}^{\beta_{i,\rho+\alpha}(h-H)} - \sum_{i=1}^{n+1} \nu_{i}^{0} - c_0 \mathrm{e}^{\gamma h} ,\\
\label{cont2} && \sum_{i=1}^{n+1} \nu^{U}_i - c_U \mathrm{e}^{\gamma H} = \sum_{i=1}^{m+1} - \omega^{0}_i - \sum_{i=1}^{n+1} \nu_{i}^{0} \mathrm{e}^{-\gamma_{i,\rho+\alpha}(H-h)} - c_0 \mathrm{e}^{\gamma H} ,\\
\label{dif1} && \sum_{i=1}^{m+1} \omega^{L}_i \beta_{i,\alpha} - c_L \gamma \mathrm{e}^{\gamma h} = \sum_{i=1}^{m+1} - \omega^{0}_i \beta_{i,\alpha+\rho} \mathrm{e}^{\beta_{i,\rho+\alpha}(h-H)} + \sum_{i=1}^{n+1} \nu_{i}^{0} \gamma_{i,\alpha+\rho} - c_0 \gamma \mathrm{e}^{\gamma h} ,\\
\label{dif2} && \sum_{i=1}^{n+1} - \nu^{U}_i \gamma_{i,\alpha} - c_U \gamma \mathrm{e}^{\gamma H} = \sum_{i=1}^{m+1} - \omega^{0}_i \beta_{i,\alpha+\rho} + \sum_{i=1}^{n+1} \nu_{i}^{0} \gamma_{i,\alpha+\rho} \mathrm{e}^{-\gamma_{i,\rho+\alpha}(H-h)} - c_0 \gamma \mathrm{e}^{\gamma H} .
\end{eqnarray}
Indeed, equations~\eqref{cont1} and \eqref{cont2} are immediate from the fact that $w(x)$ is continuous at $x=h$ and $x=H$.

For the proofs of equations~\eqref{dif1} and \eqref{dif2}, note that we have, for $h<x<H$,
\begin{multline*}
- \alpha \mathrm{e}^{\gamma x} = \frac{\sigma^{2}}{2} w_{2}^{\prime \prime}(x) + \mu w_{2}^{\prime}(x) - (\lambda+\rho+\alpha) w_{2}(x) \\
+ \lambda \left\lbrace \int_{-\infty}^{h-x} w_1(x+y) \sum_{j=1}^{n} q_j \theta_j \mathrm{e}^{\theta_j y} \mathrm{d}y + \int_{h-x}^{0} w_2(x+y) \sum_{j=1}^{n} q_i \theta_i \mathrm{e}^{\theta_i y} \mathrm{d}y \right. \\
+ \left. \int_{0}^{H-x} w_2(x+y) \sum_{j=1}^{m} p_j \eta_j \mathrm{e}^{-\eta_j y} \mathrm{d}y + \int_{H-x}^{+\infty} w_3(x+y) \sum_{j=1}^{m} p_j \eta_j \mathrm{e}^{-\eta_j y} \mathrm{d}y \right\rbrace .
\end{multline*}
Substituting expressions for $w_1(x)$, $w_2(x)$ and $w_3(x)$ yields, for $h<x<H$,
\begin{multline}\label{difx}
-\alpha \mathrm{e}^{\gamma x} = - \sum_{i=1}^{m+1} \mathrm{e}^{\beta_{i,\alpha+\rho}(x-H)} \omega_{i}^{0} \left( \frac{\sigma^{2}}{2} \beta_{i,\alpha+\rho}^{2} - \mu \beta_{i,\alpha+\rho}-(\lambda+\alpha+\rho) \right) \\
- \sum_{i=1}^{n+1} \mathrm{e}^{\gamma_{i,\alpha+\rho}(x-h)} \nu_{j}^{0} \left( \frac{\sigma^{2}}{2} \gamma_{i,\alpha+\rho}^{2} - \mu \gamma_{i,\alpha+\rho} - (\lambda+\alpha+\rho) \right) \\
+ \lambda \left\lbrace \int_{-\infty}^{h-x} \sum_{i=1}^{m+1} \mathrm{e}^{\beta_{i,\alpha}(x+y-h)} \omega_{i}^{L} \sum_{j=1}^{n} q_j \theta_j \mathrm{e}^{\theta_j y} \mathrm{d}y - \int_{h-x}^{0} \sum_{i=1}^{m+1} \mathrm{e}^{\beta_{i,\alpha+\rho}(x+y-H)} \omega_{i}^{0} \sum_{j=1}^{n} q_j \theta_j \mathrm{e}^{\theta_j y} \mathrm{d}y \right. \\
- \int_{h-x}^{0} \sum_{i=1}^{n+1} \mathrm{e}^{\gamma_{i,\alpha+\rho}(x+y-h)} \nu_{i}^{0} \sum_{j=1}^{n} q_j \theta_j \mathrm{e}^{\theta_j y} \mathrm{d}y - \int_{0}^{H-x} \sum_{i=1}^{m+1} \omega_{i}^{0} \mathrm{e}^{\beta_{i,\alpha+\rho}(x+y-H)} \sum_{j=1}^{m} p_j \eta_j \mathrm{e}^{-\eta_j y} \mathrm{d}y \\
- \int_{0}^{H-x} \sum_{j=1}^{n+1} \nu_{j}^{0} \mathrm{e}^{-\gamma_{j,\alpha+\rho}(x-h)} \sum_{j=1}^{m} p_j \eta_j \mathrm{e}^{-y(\eta_j+\gamma_{j,\alpha})} \mathrm{d}y \\
+ \left. \int_{H-x}^{+\infty} \sum_{j=1}^{n+1} \nu_j^{U} \mathrm{e}^{-\gamma_{j,\alpha}(x+y-H)} \sum_{j=1}^{m} p_j \eta_j \mathrm{e}^{-\eta_j y} \mathrm{d}y \right\rbrace \\
- c_{0} \left( \mathcal{L}-\alpha-\rho \right) \mathrm{e}^{\gamma x} \\
- \lambda \left\lbrace c_{L} \int_{-\infty}^{h-x} \mathrm{e}^{\gamma(x+y)} \sum_{j=1}^{n} q_j \theta_j \mathrm{e}^{\theta_j y} \mathrm{d}y - c_{0} \int_{-\infty}^{h-x} \mathrm{e}^{\gamma(x+y)} \sum_{j=1}^{n} q_j \theta_j \mathrm{e}^{\theta_j y} \mathrm{d}y \right. \\
- \left. c_{0} \int_{H-x}^{+\infty} \mathrm{e}^{\gamma(x+y)} \sum_{j=1}^{m} p_j \eta_j \mathrm{e}^{-\eta_j y} \mathrm{d}y + c_{U} \int_{H-x}^{+\infty} \mathrm{e}^{\gamma(x+y)} \sum_{j=1}^{m} p_j \eta_j \mathrm{e}^{-\eta_j y} \mathrm{d}y \right\rbrace .
\end{multline}
Since
$$
-c_{0} \left( \mathcal{L}-\alpha-\rho \right) \mathrm{e}^{\gamma x} = - \alpha \mathrm{e}^{\gamma x} ,
$$
and
$$
G \left( \beta_{i,\alpha+\rho} \right) - \alpha - \rho = G \left( \gamma_{j,\alpha+\rho} \right) - \alpha - \rho = 0 ,
$$
then computing the second derivative of Equation~\eqref{difx} with respect to $x$ yields~\eqref{dif2} and \eqref{dif2}.


The proof of Theorem~\ref{T:main} is complete.

%
%
\bibliographystyle{abbrv}
\bibliography{occ-time_options}

\end{document}